\definecolor{mygray}{gray}{.9}
\newtheorem{definition}{Definition}
\newtheorem{proposition}{Proposition}
\newtheorem{theorem}{Theorem}
\newtheorem{remark}{Remark}
\newcommand{\FS}[2]{\displaystyle\frac{#1}{#2}}
\newcommand{\R}{\mathbb{R}}
\numberwithin{equation}{section}
\numberwithin{definition}{section}
\numberwithin{remark}{section}
\numberwithin{theorem}{section}
\numberwithin{proposition}{section}
\numberwithin{lemma}{section}
\numberwithin{remark}{section}
\numberwithin{example}{section}
\numberwithin{figure}{section}
\numberwithin{conjecture}{section}
\numberwithin{table}{section}
\begin{document}

\title{Numerical Methods to Compute Stresses and Displacements from Cellular Forces: Application to the Contraction of Tissue}

\author{Q. Peng, F.J. Vermolen}
\date{January 6, 2020}
\maketitle

\begin{abstract}
	We consider a mathematical model for wound contraction, which is based on solving a momentum balance under the assumptions of isotropy, homogeneity, Hooke's Law, infinitesimal strain theory and point forces exerted by cells. However, point forces, described by Dirac Delta distributions lead to a singular solution, which in many cases may cause trouble to finite element methods due to a low degree of regularity. Hence, we consider several alternatives to address point forces, that is, whether to treat the region covered by the cells that exert forces as part of the computational domain or as 'holes' in the computational domain. The formalisms develop into the immersed boundary approach and the 'hole approach', respectively. Consistency between these approaches is verified in a theoretical setting, but also confirmed computationally. However, the 'hole approach' is much more expensive and complicated for its need of mesh adaptation in the case of migrating cells while it increases the numerical accuracy, which makes it hard to adapt to the multi-cell model. Therefore, for multiple cells, we consider the polygon that is used to approximate the boundary of cells that exert contractile forces. It is found that a low degree of polygons, in particular triangular or square shaped cell boundaries, already give acceptable results in engineering precision, so that it is suitable for the situation with a large amount of cells in the computational domain. 
	
\end{abstract}




\section{Introduction}
\label{intro}
\noindent
Wound healing is a complicated, but crucial biological mechanism. In this manuscript, we consider wound healing after skin injury. Since severe (burn) injuries involve a considerable loss of soft tissue, secondary healing takes place. It involves the formation of a blood clot, in case of a cutaneous wound, the regeneration of collagen (extracellular matrix), and re-vascularisation (which is the re-establishment of a small blood vessel network); see \cite{enoch2008basic} for a biological overview. One of the side effects of secondary healing that follows after a serious skin trauma, is skin contraction. Skin contraction takes place as a result of mechanical, pulling forces that are exerted by the cells (i.e. mainly fibroblasts and myofibroblasts) that are responsible for the regeneration of collagen\citep{cumming2009mathematical}. Contractions can result in a significant, temporary, or even permanent decrease of area or volume of the damaged tissue. Reductions by 5-10 \% of the original wound area have been observed in human skin and in mammalian skin of rodents, even larger reductions have been observed. Such a reduction of skin area or volume leaves residual stresses and strains in the newly repaired skin, as well as in its direct surroundings. This may cause discomfort or even painful sensations to the patient and in extreme cases, contractions may lead to dysfunctionalities of joints. If a contraction is so extreme that the patient develops a disability, then the contraction is referred to as a contracture.

For many of the biological mechanisms that take place during wound healing, mathematical models have been developed. The current manuscript focusses on the formation of a contraction post wounding. Fibroblasts enter the wound site as a result of chemotaxis due to the TGF-beta gradient. Next to the regeneration of collagen, fibroblasts also exert pulling forces to their immediate environment\citep{hinz2006masters}. In some cases, due to being triggered by the high concentration of TGF-beta, fibroblasts differentiate to myofibroblasts, which are known to exert even larger forces than fibroblasts. These larger pulling forces result into the contraction of the tissue around the injury towards the wound centre\citep{darby2014fibroblasts, grinnell1994fibroblasts,li2011fibroblasts}.

In the literature, several attempts to model the contraction phenomenon can be found\citep{boon2016multi,koppenol2017biomedical,murphy2012fibrocontractive, olsen1995mechanochemical,ramtani2004mechanical}. The current manuscript focusses on hybrid models for simulating wound contraction in a small scale, where we consider cells as individual entities. We will consider point forces for modelling the balance of momentum, respectively. The modelling framework will entail Dirac Delta functions (distributions), where these pulse-like forces will lead to singularities of the solution in terms of a lower (local) degree of regularity, even such that the solution no longer falls within the finite-element space in which one looks for the solution. Some of the issues have been treated in \cite{bertoluzza2018local}, \cite{gjerde2018splitting} and \cite{scott1973finite}, regarding well-posedness and finite-element solutions. The treatment of momentum using point forces that we consider in the current paper was developed in \cite{vermolen2015semi},
\cite{boon2016multi} and \cite{koppenol2017biomedical}.

The quest of several alternative methods is motivated by finding ways to improve accuracy, and by the need of efficiency to simulate the mechanical processes occurring in the skin after a serious (burn) trauma. There are different approaches that treat point forces on the boundary of a cell. One may include the region covered by the cell as part of computational domain. This idea develops into the immersed boundary approach. On the contrary, the 'hole approach', is based on excluding the cell from the computational domain and treat the cell forces as a boundary condition. In this paper, we will focus on the balance of momentum where inertia is neglected and where we assume Hooke's Law to be satisfied. Further, we will use the infinitesimal strain approach. To the best of our knowledge, this paper is the first study that assesses the relation between the 'hole approach' and the immerse boundary approach both analytically and computationally.

The paper is structured as follows. In Section \ref{BVP}, we will discuss the singularity problem occurring in the solution of partial differential equations. Section \ref{MathsModels} investigates the 'hole approach' as an alternative to the immersed boundary method, and consistency between these approaches is verified. For a large number of cells in the computational domain, various polygonal approximations of the cell boundary are discussed. In Section \ref{Results}, we compare the immersed boundary approach to the 'hole approach' and show the results from the polygonal cell approach using various polygonal degrees. Finally some conclusions are presented.


\section{Boundary Value Problems with Point Source}\label{BVP}
From the definition of the Dirac Delta function, it immediately follows that there is a singularity in the solution to the partial differential equations(PDEs) in some cases. This singularity causes that the solution is irregular and even unbounded if the dimensionality exceeds one. If the PDEs are solved in an infinite domain with Dirac Delta distributions, the solution is known as Green's function. Inspired by this, hereby, we use the Green's function as an intermediate to determine whether there is a singular solution in a given finite domain. In the following contents, we will investigate the solutions in Laplacian equation and elasticity equation respectively.

\begin{theorem}
Given an open bounded domain $\boldsymbol{0}\in\Omega\subset\R^d$, $d>1$, and the boundary value problem below:
\begin{equation}
\label{Eq_lapla_rbc}
(BVP_1)\left\{
\begin{aligned}
-\Delta u&=\delta(\boldsymbol{x}),\quad&\mbox{in $\Omega$,}\\
\FS{\partial u}{\partial\boldsymbol{n}}+\kappa u&=0,\quad&\mbox{on $\partial\Omega$.}
\end{aligned}
\right.
\end{equation}
Then there does not exist a solution $u\in H^1(\Omega)$ such that $u$ can solve $(BVP_1)$.
\end{theorem}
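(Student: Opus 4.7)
The plan is to proceed by contradiction, using the classical fundamental solution of the Laplacian to isolate the singularity. Suppose such a $u\in H^1(\Omega)$ exists, and let $G$ denote the fundamental solution of $-\Delta$ on $\R^d$ centered at the origin, namely $G(\boldsymbol{x})=-\frac{1}{2\pi}\log|\boldsymbol{x}|$ for $d=2$ and $G(\boldsymbol{x})=\frac{1}{(d-2)\omega_d|\boldsymbol{x}|^{d-2}}$ for $d\geq 3$, where $\omega_d$ is the surface area of the unit sphere. Since $-\Delta G=\delta(\boldsymbol{x})$ in the distributional sense, the difference $w:=u-G$ satisfies $-\Delta w=0$ in $\Omega$. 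The boundary data for $w$ on $\partial\Omega$, obtained from the Robin condition on $u$ combined with the restriction of $G$ and $\partial_{\boldsymbol{n}}G$ to $\partial\Omega$, is smooth because $\boldsymbol{0}\notin\partial\Omega$ and $G$ is real-analytic away from the origin.

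The next step is to invoke standard interior and boundary elliptic regularity: a harmonic function on $\Omega$ with smooth Robin data is smooth up to the boundary, hence in particular lies in $H^1(\Omega)$. Consequently, if $u\in H^1(\Omega)$ were to hold, we would obtain $G=u-w\in H^1(\Omega)$.

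The contradiction is then extracted from the well-known singular behaviour of $G$ near the origin. Fixing a ball $B_\varepsilon(\boldsymbol{0})\subset\Omega$ and passing to polar/spherical coordinates, one has $|\nabla G(\boldsymbol{x})|\sim|\boldsymbol{x}|^{1-d}$, so
\begin{equation*}
\int_{B_\varepsilon(\boldsymbol{0})}|\nabla G|^2\,d\boldsymbol{x}\;\sim\;\int_0^\varepsilon r^{2(1-d)}\,r^{d-1}\,dr\;=\;\int_0^\varepsilon r^{1-d}\,dr,
\end{equation*}
which diverges for every $d>1$ (logarithmically for $d=2$, as a negative power for $d\geq 3$). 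Thus $G\notin H^1(B_\varepsilon(\boldsymbol{0}))$, and a fortiori $G\notin H^1(\Omega)$, contradicting the conclusion of the previous step. Therefore no $u\in H^1(\Omega)$ can solve $(BVP_1)$.

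The routine parts are the fundamental-solution identity and the divergence of the gradient integral. The only step that requires some care is justifying that $w=u-G$ is genuinely harmonic with $H^1$ regularity: this rests on interpreting $-\Delta u=\delta(\boldsymbol{x})$ in the weak/distributional sense and applying elliptic regularity for the Robin problem with smooth data on a smooth bounded domain. If the domain $\Omega$ has only Lipschitz boundary, one can avoid boundary regularity altogether by a purely local argument, noting that $w$ is harmonic in a neighbourhood of $\boldsymbol{0}$ and hence smooth there, which already suffices to conclude $G\in H^1(B_\varepsilon(\boldsymbol{0}))$ from $u\in H^1(\Omega)$ — so this is the main place where one must be precise, but it does not present a substantial obstacle.
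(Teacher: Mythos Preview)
Your proposal is correct and follows essentially the same approach as the paper: subtract the fundamental solution $G$ (the paper's $\hat u$) to obtain a harmonic remainder $w=u-G$ with smooth Robin data, observe that $w\in H^1(\Omega)$, and then derive a contradiction from the divergence of $\int_{\Omega}|\nabla G|^2$ near the origin. Your write-up is in fact a bit more careful than the paper's in justifying the $H^1$ regularity of the harmonic remainder via elliptic regularity (and your fallback local argument for Lipschitz domains is a nice touch), whereas the paper simply asserts that the solution $v$ is ``classic.''
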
    
\noindent
\begin{proof}
Considering Laplacian equation with Dirac Delta function in an infinite region
\begin{equation}
\label{Eq_lapla}
-\Delta u=\delta(\boldsymbol{x}),
\end{equation}
the solution to which is known as the Green's function is 
\begin{equation}
\label{Eq_lapla_green}
\hat{u}(\boldsymbol{x})=\left\{
\begin{aligned}
-\FS{1}{2\pi}\log\|\boldsymbol{x}\|,\quad&\mbox{$d=2$,}\\
\FS{1}{d(d-2)a_d}\cdot\FS{1}{\|\boldsymbol{x}\|^{d-2}},\quad&\mbox{$d\geqslant 3$,}
\end{aligned}
\right.
\end{equation}
where $a_d$ is the total 'surface area' of $(d-1)$-dimensional sphere, i.e. $a_d=2\pi^{(d-1)/2}/\Gamma((d-1)/2)$. Here, $\Gamma(t)=\int_{0}^{\infty}x^{t-1}e^{-x}dx$ is Euler's Gamma function.

Denote $v=u-\hat{u}$ and then $u$ is extracted as $u=v+\hat{u}$. Combining Eq (\ref{Eq_lapla_rbc}) and Eq (\ref{Eq_lapla}), a new boundary value problem is derived:
\begin{equation}
\label{Eq_lapla_homo}
(BVP_1')\left\{
\begin{aligned}
-\Delta v&=0,\quad&\mbox{in $\Omega$,}\\
\FS{\partial v}{\partial\boldsymbol{n}}+\kappa v&=-(\FS{\partial\hat{u}}{\partial\boldsymbol{n}}+\kappa\hat{u}),\quad&\mbox{on $\partial\Omega$.}
\end{aligned}
\right.
\end{equation}
The weak form of $(BVP_1')$ is 
\[ \left\{
\begin{aligned}
&\text{Find $v\in H^1(\Omega)$, such that}\\ 
&\int_{\partial\Omega}\kappa v\phi d\Gamma+\int_{\Omega}v\phi d\Omega=-\int_{\partial\Omega}\kappa\hat{u}+\FS{\partial\hat{u}}{\partial\boldsymbol{n}}d\Gamma,\\
&\text{for all $\boldsymbol{\phi}\in H^1(\Omega)$.}
\end{aligned}	
\right.\]
Note that the solution of $v$ is classic, which is a sufficient condition that $v$ is in $H^1$ space. However, the Green's function is not lying in $H^1$, since $$\int_{\boldsymbol{0}\in\Omega}\|\nabla\hat{u}\|^2d\Omega\rightarrow\infty$$ regardless of the dimensions $d>1$. Since $u = \hat{u} + v$, and $\hat{u} \notin H^1(\Omega)$, it immediately follows that $u \notin H^1(\Omega)$. 
\end{proof}
\begin{remark}
The one-dimensional case of Laplacian equation with boundary conditions does not give unboundedness since the Green's function $$\hat{u}=-\|x\|,$$ is piecewise linear. Hence, the solution is in $H^1(\Omega)$.
\end{remark}

Considering the elasticity equation in one dimension with point source, the equations are expressed as 
\begin{eqnarray}
-\frac{d\sigma}{dx}&=\delta(x),&\quad\mbox{Equation of Equlibirum,}\\
\epsilon&=\frac{du}{dx},&\quad\mbox{Strain-Displacement Relation,}\\
\sigma&=E\epsilon,&\quad\mbox{Constitutive Equation}.
\end{eqnarray}
To simplify the equation with $E=1$ here, the equations above can be combined to Laplacian equation in one dimension:
\begin{equation}
-\frac{d^2u}{dx^2}=\delta(x),
\end{equation}
which contains a solution in $H^1(\Omega)$. For dimensions above one, unfortunately, we have found the Green's function in three dimensions in \cite{weinberger2005lecture}. Therefore, the theorem only states the situation in three dimensions.
\begin{theorem}
Given an open bounded domain $\boldsymbol{0}\in\Omega\subset\R^3$, and the boundary value problem below:
\begin{equation}
\label{Eq_elas_rbc}
(BVP_3)\left\{
\begin{aligned}
-\nabla\cdot\boldsymbol{\sigma}&=\boldsymbol{F}\delta(\boldsymbol{x}),\quad&\mbox{in $\Omega$,}\\
\boldsymbol{\sigma}\cdot\boldsymbol{n}+\kappa\boldsymbol{u}&=\boldsymbol{0},\quad&\mbox{on $\partial\Omega$,}
\end{aligned}
\right.
\end{equation}
where the strain tensor and stress tensor are defined as $$\boldsymbol{\epsilon}=\FS{1}{2}\left[\nabla\boldsymbol{u}+(\nabla\boldsymbol{u})^T\right],$$ and $$\boldsymbol{\sigma}=\FS{E}{1+\nu}\left\lbrace \boldsymbol{\epsilon}+tr(\boldsymbol{\epsilon})\left[ \FS{\nu}{1-2\nu}\right] \boldsymbol{I}\right\rbrace,$$ respectively. Then there does not exist a solution $\boldsymbol{u}\in H^1(\Omega)$ such that $\boldsymbol{u}$ can solve $(BVP_3)$.
\end{theorem}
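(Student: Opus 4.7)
The plan is to mirror the proof of Theorem 2.1: decompose the unknown displacement as $\boldsymbol{u}=\hat{\boldsymbol{u}}+\boldsymbol{v}$, where $\hat{\boldsymbol{u}}$ is the three-dimensional Kelvin fundamental solution (the Green's tensor for linear elasticity with a unit point force at the origin, as recorded in \cite{weinberger2005lecture}). Substituting this decomposition into $(BVP_3)$ and using the fact that $\hat{\boldsymbol{u}}$ already absorbs the Dirac source, the inhomogeneity is pushed entirely onto the Robin boundary condition, giving a clean elasticity problem for $\boldsymbol{v}$.

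First I would write down the Kelvin solution explicitly and extract its leading-order behaviour near the point source, namely $\|\hat{\boldsymbol{u}}(\boldsymbol{x})\|=\mathcal{O}(1/\|\boldsymbol{x}\|)$ and $\|\nabla\hat{\boldsymbol{u}}(\boldsymbol{x})\|=\mathcal{O}(1/\|\boldsymbol{x}\|^2)$ as $\boldsymbol{x}\to\boldsymbol{0}$, and confirm that it satisfies $-\nabla\cdot\boldsymbol{\sigma}(\hat{\boldsymbol{u}})=\boldsymbol{F}\delta(\boldsymbol{x})$ distributionally on $\R^3$. Then I would pose the reduced problem
\begin{equation*}
(BVP_3')\left\{
\begin{aligned}
-\nabla\cdot\boldsymbol{\sigma}(\boldsymbol{v})&=\boldsymbol{0},&&\text{in }\Omega,\\
\boldsymbol{\sigma}(\boldsymbol{v})\cdot\boldsymbol{n}+\kappa\boldsymbol{v}&=-\bigl(\boldsymbol{\sigma}(\hat{\boldsymbol{u}})\cdot\boldsymbol{n}+\kappa\hat{\boldsymbol{u}}\bigr),&&\text{on }\partial\Omega.
\end{aligned}
\right.
\end{equation*}
Because $\boldsymbol{0}$ lies in the \emph{interior} of $\Omega$, the boundary $\partial\Omega$ avoids the singularity of $\hat{\boldsymbol{u}}$, so the Robin datum is smooth. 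As in the proof of Theorem 2.1, this implies (via Lax--Milgram plus Korn's inequality applied to the weak form) a classical solution $\boldsymbol{v}$, which is in particular in $H^1(\Omega)$.

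The decisive step is to show $\hat{\boldsymbol{u}}\notin H^1(\Omega)$. Using the gradient decay rate above, a direct computation in spherical coordinates on any ball $B_\rho(\boldsymbol{0})\subset\Omega$ yields
\begin{equation*}
\int_{B_\rho(\boldsymbol{0})}\|\nabla\hat{\boldsymbol{u}}\|^2\,d\Omega\;\sim\;\int_0^\rho\FS{1}{r^4}\cdot r^2\,dr\;=\;\int_0^\rho\FS{1}{r^2}\,dr\;=\;+\infty,
\end{equation*}
so $\hat{\boldsymbol{u}}$ fails to lie in $H^1(\Omega)$. Since $\boldsymbol{v}\in H^1(\Omega)$ while $\hat{\boldsymbol{u}}\notin H^1(\Omega)$, the identity $\boldsymbol{u}=\hat{\boldsymbol{u}}+\boldsymbol{v}$ forces $\boldsymbol{u}\notin H^1(\Omega)$, which is the claim.

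The main obstacle I anticipate is twofold. First, one has to handle the Kelvin tensor carefully enough to be sure that cancellations among its several tensorial components do not accidentally raise the regularity, i.e.\ that $\|\nabla\hat{\boldsymbol{u}}\|$ really behaves like $\|\boldsymbol{x}\|^{-2}$ and not like a lower-order power; a componentwise estimate from the explicit formula settles this. Second, one needs the regular part $\boldsymbol{v}$ to be genuinely $H^1$: for the Robin problem in linear elasticity this requires Korn's inequality to secure coercivity of the bilinear form on $H^1(\Omega)^3$, after which Lax--Milgram delivers existence and $H^1$-regularity simultaneously. All other steps are direct transcriptions of the Laplacian argument in Theorem 2.1.
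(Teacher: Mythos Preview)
Your proposal is correct and follows essentially the same route as the paper: decompose $\boldsymbol{u}=\hat{\boldsymbol{u}}+\boldsymbol{v}$ with $\hat{\boldsymbol{u}}$ the Kelvin fundamental solution from \cite{weinberger2005lecture}, observe that the resulting problem $(BVP_3')$ for $\boldsymbol{v}$ has smooth Robin data on $\partial\Omega$ and hence $\boldsymbol{v}\in H^1(\Omega)$, and then show $\hat{\boldsymbol{u}}\notin H^1(\Omega)$ by a spherical-coordinate integration near the origin. The only cosmetic difference is that the paper verifies the blow-up of $\int\|\nabla\hat{\boldsymbol{u}}\|^2$ by writing out one component $\partial\hat{u}_x/\partial x$ explicitly and integrating, whereas you argue via the scaling $\|\nabla\hat{\boldsymbol{u}}\|\sim\|\boldsymbol{x}\|^{-2}$; your remark that one must check that tensorial cancellations do not improve this rate is well taken and is exactly what the paper's explicit component calculation is implicitly guarding against.
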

\noindent
\begin{proof}
From \cite{weinberger2005lecture}, the Green's function in three dimensions is 
$$G_{ij}(\boldsymbol{x})=\FS{1}{16\pi\mu(1-\nu)\|\boldsymbol{x}\|}\left( (3-4\nu)\delta_{ij}+\FS{x_ix_j}{\|\boldsymbol{x}\|^2}\right) ,$$
where $\mu$ and $\nu$ is the second Lam\'e parameter and the Poisson ratio, and  $i,j$ present different coordinates. Further, $\delta_{ij}$ represents the Kronecker Delta function. The displacement vector of each coordinate can be expressed by
\begin{equation}
\label{Eq_elas_green}
\hat{u_i}(\boldsymbol{x})=\sum_{j=1}^{3}G_{ij}(\boldsymbol{x})F_j=\sum_{j=1}^{3}\FS{F_j}{16\pi\mu(1-\nu)\|\boldsymbol{x}\|}\left( (3-4\nu)\delta_{ij}+\FS{x_ix_j}{\|\boldsymbol{x}\|^2}\right).
\end{equation} 
Thus, similarly as before, letting $\boldsymbol{v}=\boldsymbol{u}-\boldsymbol{\hat{u}}$, then the problem becomes 
\begin{equation}
\label{Eq_elas_homo}
(BVP_3')\left\{
\begin{aligned}
-\nabla\cdot\boldsymbol{\sigma}(\boldsymbol{v})&=\boldsymbol{0},\quad&\mbox{in $\Omega$,}\\
\boldsymbol{\sigma}(\boldsymbol{v})\cdot\boldsymbol{n}+\kappa\boldsymbol{v}&=-(\boldsymbol{\sigma}(\boldsymbol{n}\cdot\boldsymbol{\hat{u}})+\kappa\boldsymbol{\hat{u}}),\quad&\mbox{on $\partial\Omega$.}
\end{aligned}
\right.
\end{equation}
Again, $\boldsymbol{v}$ gives classical solution in $\boldsymbol{H}^1(\Omega)$, which implies that we only need to determine whether the Green's function Eq (\ref{Eq_elas_green}) is in $\boldsymbol{H}^1(\Omega)$. Due to the complexity of the expression of the Green's function, it is only necessary to prove part of the integral of $\|\nabla\boldsymbol{\hat{u}}\|^2=\sum_{i,j=1}^{3}\|\FS{\partial\hat{u}_i(\boldsymbol{x})}{\partial x_j}\|^2$ is infinite over the domain $\Omega$ containing the original point. Here, we will calculate the integral of $\|\FS{\partial\hat{u}_x(\boldsymbol{x})}{\partial x}\|^2$ as an example:
\begin{equation*}
\left.
\begin{aligned}
&\int_{\boldsymbol{0}\in\Omega}\|\FS{\partial\hat{u}_x(\boldsymbol{x})}{\partial x}\|^2d\Omega\\
&=\int_{\boldsymbol{0}\in\Omega}\left(-\FS{F_x(3-4\nu)}{16\pi\mu(1-\nu)}\FS{x}{(x^2+y^2+z^2)^{3/2}}+\FS{2F_x}{16\pi\mu(1-\nu)}\FS{x}{(x^2+y^2+z^2)^{3/2}}\right.\\
&-\FS{3F_x}{16\pi\mu(1-\nu)}\FS{x^3}{(x^2+y^2+z^2)^{5/2}}+\FS{yF_y+zF_z}{16\pi\mu(1-\nu)}\FS{1}{(x^2+y^2+z^2)^{3/2}}\\
&\left.-\FS{3(yF_y+zF_z)}{2\cdot16\pi\mu(1-\nu)}\FS{x}{(x^2+y^2+z^2)^{5/2}}\right)^2d\Omega.
\end{aligned}
\right.
\end{equation*}
Then we rewrite the equation with spherical coordinates as $$x=r\sin\phi\cos\theta,\quad y=r\sin\phi\sin\theta,\quad z=r\cos\phi,\quad r=\sqrt{x^2+y^2+z^2}.$$
Therefore, 
\begin{equation*}
\left.
\begin{aligned}
&\int_{\boldsymbol{0}\in\Omega}\|\FS{\partial\hat{u}_x(\boldsymbol{x})}{\partial x}\|^2d\Omega\\
&\propto\int_{\boldsymbol{0}\in\Omega'}r^2\sin\phi\left(\FS{\sin\phi\cos\theta}{r^2}+\FS{\sin^3\phi\cos^3\theta}{r^2}+\FS{\sin\phi\sin\theta+\cos\phi}{r^2}\right.\\
&\left.-\FS{\sin^2\phi\cos\theta\sin\theta+\sin\phi\cos\phi\cos\theta}{r^3} \right)^2d\Omega'\\
&=\int_{\boldsymbol{0}\in\Omega'}\FS{1}{r^2}\sin\phi\left(\sin\phi\cos\theta+\sin^3\phi\cos^3\theta+\sin\phi\sin\theta+\cos\phi\right.\\
&\left.-\FS{\sin^2\phi\cos\theta\sin\theta+\sin\phi\cos\phi\cos\theta}{r} \right)^2d\Omega'.
\end{aligned}
\right.
\end{equation*}
Integrating with respect to $r$ and noting that the inferior of the integral is $0$, then 
\begin{equation}
\label{Eq_elas_pf}
\int_{\boldsymbol{0}\in\Omega'}K_1(\phi,\theta)\FS{1}{r^2}+K_2(\phi,\theta)\FS{1}{r^3}d\Omega'\rightarrow\infty,
\end{equation}
where $K_i(\phi,\theta), i=1,2$ is the expression of $\phi$ and $\theta$. For other derivative parts, they end up with the same situation in Eq (\ref{Eq_elas_pf}), that is, for every part of integral $\int_{\boldsymbol{0}\in\Omega}\|\nabla\boldsymbol{\hat{u}}\|d\Omega$, the integral does not exist. Hence, it can be concluded that the Green's function in isotropic open bounded domain is not in $H^1(\Omega)$, which leads to the consequence that the solution to $(BVP_3)$, expressed by $\boldsymbol{u}=\boldsymbol{v}+\boldsymbol{\hat{u}}$, is not in $\boldsymbol{H}^1(\Omega)$ either.  
\end{proof}
\begin{remark}
Theorems 1 and 2 can also be proved for the case of homogeneous Dirichlet boundary conditions.
\end{remark}

\section{Mathematical Models of Point Forces in Wound Healing}\label{MathsModels}
\subsection{The Immersed boundary method in $\R^2$}
\noindent
The (myo)fibroblasts exert pulling forces on their immediate surroundings in the extracellular matrix. These forces are directed towards the cell centre and they cause local displacements and deformation of the extracellular matrix. The combination of all these forces cause a net contraction of the tissue around the region, where the fibroblasts are actively exerting forces. The fibroblasts, which are responsible for the regeneration of collagen, enter the wound area after serious trauma due to chemotaxis. Since after restoration of the collagen, the fibroblasts die as a result of apoptosis (programmed cell death), the forces that they exert on their environment disappear. If the deformations are relatively large, then residual stresses remain and permanent displacements remain. Therefore, we consider two types of forces: temporary forces ($\boldsymbol{f}_t$) and plastic forces ($\boldsymbol{f}_p$). Here, we will only treat the temporary forces and the way we treat them has been formalized by \cite{vermolen2015semi}, \cite{boon2016multi} and \cite{koppenol2017biomedical}.

For the temporary force of cell $i$, the cell boundary $\Gamma^i$ is divided into line segments in the two-dimensional case. We assume that an inward directed force is exerted at the midpoint of every line segment in the normal direction to the line segment. The total force is a linear combination of every force at every segment. Hence, at time $t$, the total temporary force is expressed by 
\begin{equation}
\label{Eq_ElasTempeq}
\boldsymbol{f}_t(t)=\sum_{i=1}^{T_N(t)}\sum_{j=1}^{N_S^i}P(\boldsymbol{x}_j^i(t))\boldsymbol{n}(\boldsymbol{x}_j^i(t))\delta(\boldsymbol{x}-\boldsymbol{x}_j^i(t))\Delta\Gamma_N^{i,j},
\end{equation}  
where $T_N(t)$ is the number of cells at time $t$, $N_S^i$ is the number of line segments of cell $i$, $P(\boldsymbol{x})$ is the magnitude of the pulling force exerted at point $\boldsymbol{x}$ per length, $\boldsymbol{n}(\boldsymbol{x})$ is the unit inward pointing normal vector (towards the cell centre) at position $\boldsymbol{x}$, $\boldsymbol{x}_j^i(t)$ is the midpoint on line segment $j$ of cell $i$ at time $t$ and $\Delta\Gamma_N^{i,j}$ is the length of line segment $j$.

Theoretically, when $N_S^i\to\infty$, i.e. $\Delta\Gamma_N^{i,j}\to0$, Eq (\ref{Eq_ElasTempeq}) becomes
\begin{equation}
\boldsymbol{f}_t(t)=\sum_{i=1}^{T_N(t)}\int_{\partial\Omega^i}P(\boldsymbol{x}^i(t))\boldsymbol{n}(\boldsymbol{x}^i(t))\delta(\boldsymbol{x}-\boldsymbol{x}^i(t))d\Gamma^{i}.
\end{equation}  
Here, $\boldsymbol{x}^i(t)$ is a point on the cell boundary of cell $i$ at time $t$. 

The equation for conservation of momentum over the computational domain $\Omega$ is given by:
\begin{equation*}
-\nabla\cdot\boldsymbol{\sigma}=\boldsymbol{f}.
\end{equation*}
In the above equation inertia has been neglected. We treat the computational domain as a continuous linear isotropic elastic domain. Therefore, we use Hooke's Law:
\begin{equation}
\label{Eq_elas_sigma}
\boldsymbol{\sigma}=\FS{E}{1+\nu}\left\lbrace \boldsymbol{\epsilon}+tr(\boldsymbol{\epsilon})\left[ \FS{\nu}{1-2\nu}\right] \boldsymbol{I}\right\rbrace,
\end{equation}
where $E$ is the Young's modulus of the domain, $\nu$ is Poisson's ratio and $\boldsymbol{\epsilon}$ is the infinitesimal strain tensor, that is,
\begin{equation}
\label{Eq_elas_eps}
\boldsymbol{\epsilon}=\FS{1}{2}\left[\nabla\boldsymbol{u}+(\nabla\boldsymbol{u})^T\right].
\end{equation}
The above PDE provides a good approximation if the displacements are relatively small. Further, we define the inner product of two second-order $n\times n$ tensors (matrices) $\boldsymbol{A}$ and $\boldsymbol{B}$ as follows:
$$\boldsymbol{A}:\boldsymbol{B}=\sum_{i,j=1}^{n}a_{ij}b_{ij},$$
where $a_{ij}$ and $b_{ij}$ are the entries of $\boldsymbol{A}$ and $\boldsymbol{B}$, respectively.

On the outer boundary $\partial \Omega$, we use the following Robin boundary condition $$\boldsymbol{\sigma}\cdot\boldsymbol{n}+\kappa\boldsymbol{u}=\boldsymbol{0},$$
where $\kappa$ is a positive constant representing a spring force constant between the domain of computation and its far away surroundings, and $\boldsymbol{u}$ denotes the displacement vector. Note that if $\kappa\rightarrow\infty$, then $\boldsymbol{u}\rightarrow\boldsymbol{0}$ which represents a fixed boundary, and $\kappa\rightarrow0$ represents a free boundary in the sense that no external force is exerted on the boundary.

For the case of only one cell $i$ in the computational domain, we need to solve the following boundary value problem:
\begin{equation}
\label{Eq_elas_immersed}
\left\{
\begin{aligned}
-\nabla\cdot\boldsymbol{\sigma}&=\sum_{j=1}^{N_S^i}P(\boldsymbol{x}_j^i(t))\boldsymbol{n}(\boldsymbol{x}_j^i(t))\delta(\boldsymbol{x}-\boldsymbol{x}_j^i(t))\Delta\Gamma^{i,j},\quad&\mbox{in $\Omega$,}\\
\boldsymbol{\sigma}\cdot\boldsymbol{n}+\kappa\boldsymbol{u}&=\boldsymbol{0},\quad&\mbox{on $\partial\Omega$.}
\end{aligned}
\right.
\end{equation}
Let $\boldsymbol{V}(\Omega)$ be a completion of the Hilbert space $\boldsymbol{H}^1(\Omega)$ with smooth functions\citep{scott1973finite}, then the corresponding weak form of Eq (\ref{Eq_elas_immersed}) on $\Omega$ is
\[ (WF_I)\left\{
\begin{aligned}
&\text{Find $\boldsymbol{u}\in \boldsymbol{V}(\Omega)$, such that}\\ &\int_{\partial\Omega}\kappa\boldsymbol{u\phi}d\Gamma+\int_{\Omega}\boldsymbol{\sigma}:\nabla\boldsymbol{\phi}d\Omega\\
&=\int_{\Omega}\sum_{j=1}^{N_S^i}P(\boldsymbol{x}_j^i(t))\boldsymbol{n}(\boldsymbol{x}_j^i(t))\delta(\boldsymbol{x}-\boldsymbol{x}_j^i(t))\Delta\Gamma^{i,j}\boldsymbol{\phi}d\Omega\\
&\rightarrow\int_{\Omega}\int_{\partial \Omega_N^i}P(\boldsymbol{x}^i(t))\boldsymbol{n}(\boldsymbol{x}^i(t))\delta(\boldsymbol{x}-\boldsymbol{x}^i(t))\boldsymbol{\phi}d\Gamma^id\Omega,\quad\mbox{as $N_S^i\rightarrow\infty$}\\
&\text{for all $\boldsymbol{\phi}\in \boldsymbol{V}(\Omega)$.}
\end{aligned}	
\right.\]

\subsection{The 'Hole Approach' in $\R^2$}
\noindent
Since the force is actually applied on a continuous curve, rather than working on the complete computational domain, we remove the region occupied by the cell. It leaves the computational domain with a hole that is occupied by the cell. Then the force on the cell boundary is modelled by a boundary condition on the boundary of the hole (cell). Therewith, we have boundary conditions on the external boundary, as well as a force boundary condition on the boundary of the cell. The boundary value problem we are working on becomes
\begin{equation}
\label{Eq_elas_hole}
\left\{
\begin{aligned}
-\nabla\cdot\boldsymbol{\sigma}&=0,\quad&\mbox{in $\Omega\backslash\Omega_C$,}\\
\boldsymbol{\sigma}\cdot\boldsymbol{n}&=P(\boldsymbol{x})\boldsymbol{n}(\boldsymbol{x}),\quad&\mbox{on $\partial\Omega_C$,}\\
\boldsymbol{\sigma}\cdot\boldsymbol{n}+\kappa\boldsymbol{u}&=\boldsymbol{0},\quad&\mbox{on $\partial\Omega$,}
\end{aligned}
\right.
\end{equation}
where $\boldsymbol{n}(\boldsymbol{x})$ is the unit normal vector pointing out of $\Omega\backslash\Omega_C$, $\Omega$ is the complete computational domain including the cell and extracellular regions, $\Omega_C$ is the region occupied by the cell, and $\partial\Omega_C$ is the boundary of the cell. The corresponding weak form for Eq (\ref{Eq_elas_hole}) is
\[ (WF_{H})\left\{
\begin{aligned}
&\text{Find $\boldsymbol{u}\in \boldsymbol{H}^1(\Omega\backslash\Omega_C)$, such that}\\ &\int_{\partial\Omega}\kappa\boldsymbol{u\phi}d\Gamma+\int_{\Omega\backslash\Omega_C}\boldsymbol{\sigma}:\nabla\boldsymbol{\phi}d\Omega=\int_{\partial\Omega_C}P(\boldsymbol{x})\boldsymbol{n}(\boldsymbol{x})\boldsymbol{\phi}d\Gamma,\\
&\text{for all $\boldsymbol{\phi}\in \boldsymbol{H}^1(\Omega\backslash\Omega_C)$.}
\end{aligned}	
\right.\]
Note that to this problem, it can be proved by combining Lax-Milgram's lemma with Korn's Inequality that a unique solution in $\boldsymbol{H}^1(\Omega)$ exists. In the analysis to come, we assume that the 
cell stays at the same position and keeps the same shape, hence we have $\boldsymbol{x}(t) = \boldsymbol{x}$.
\begin{proposition}
Let $\boldsymbol{u}_H$ and $\boldsymbol{u}_I$, respectively, be solutions to the 'hole approach' (see Equation (\ref{Eq_elas_hole})), and to the immersed boundary approach (see Equation (\ref{Eq_elas_immersed})). Let $\partial \Omega_C$ denote the line over which internal forces are exerted, and let $\partial \Omega$ be the outer boundary of $\Omega$. Then as $\Delta \Gamma \longrightarrow 0$, $$\int_{\partial \Omega} \kappa \boldsymbol{u}_H d \Gamma = \int_{\partial \Omega} \kappa \boldsymbol{u}_I d \Gamma = \int_{\partial \Omega_C} P(\boldsymbol{x}) \boldsymbol{n}(\boldsymbol{x}) d \Gamma.$$
\end{proposition}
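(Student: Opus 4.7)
The plan is to plug carefully chosen test functions into the two weak forms $(WF_I)$ and $(WF_H)$ and read off the desired boundary identity. The key observation is that the bilinear term $\int_\Omega \boldsymbol{\sigma}(\boldsymbol{u}):\nabla\boldsymbol{\phi}\,d\Omega$ vanishes identically whenever $\boldsymbol{\phi}$ is a rigid translation. More precisely, if $\boldsymbol{\phi} = \boldsymbol{e}_k$ is a constant coordinate vector, then $\nabla\boldsymbol{\phi} = \boldsymbol{0}$, and consequently both $\boldsymbol{\sigma}(\boldsymbol{u}):\nabla\boldsymbol{\phi}$ and the associated strain tensor vanish. Constant vector fields plainly belong to the trial/test spaces $\boldsymbol{V}(\Omega)$ and $\boldsymbol{H}^1(\Omega\setminus\Omega_C)$ (they are smooth with zero gradient and bounded trace), so substituting them is legitimate.

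First I would treat the hole approach. Taking $\boldsymbol{\phi}=\boldsymbol{e}_k$ in $(WF_H)$ kills the stress term and yields, component by component, $\int_{\partial\Omega}\kappa(\boldsymbol{u}_H)_k\,d\Gamma = \int_{\partial\Omega_C}P(\boldsymbol{x})\,n_k(\boldsymbol{x})\,d\Gamma$ for $k=1,2$. Assembling the components gives
\begin{equation*}
\int_{\partial\Omega}\kappa\boldsymbol{u}_H\,d\Gamma = \int_{\partial\Omega_C}P(\boldsymbol{x})\boldsymbol{n}(\boldsymbol{x})\,d\Gamma,
\end{equation*}
which is the second equality of the proposition. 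This is essentially a global force-balance statement: the integrated Robin reaction on $\partial\Omega$ equals the total pulling force applied on the cell boundary.

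Next I would carry out the same substitution in $(WF_I)$. With $\boldsymbol{\phi}=\boldsymbol{e}_k$, the stress term again vanishes, and the right-hand side collapses via the sifting property of $\delta$ to the Riemann-type sum $\sum_{j=1}^{N_S^i}P(\boldsymbol{x}_j^i)\,\boldsymbol{n}(\boldsymbol{x}_j^i)\cdot\boldsymbol{e}_k\,\Delta\Gamma^{i,j}$ over midpoints of the segmentation of $\partial\Omega_C$. Since $P$ and $\boldsymbol{n}$ are assumed regular along $\partial\Omega_C$, this is precisely a midpoint Riemann sum which, as $\Delta\Gamma\to 0$ (equivalently $N_S^i\to\infty$), converges to $\int_{\partial\Omega_C}P(\boldsymbol{x})\,n_k(\boldsymbol{x})\,d\Gamma$. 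Passing the limit through on the left side is harmless because the left side is independent of the segmentation after the limit is taken (or, in the statement of $(WF_I)$, the limit is already incorporated), giving $\int_{\partial\Omega}\kappa\boldsymbol{u}_I\,d\Gamma=\int_{\partial\Omega_C}P(\boldsymbol{x})\boldsymbol{n}(\boldsymbol{x})\,d\Gamma$. Combining the two identities yields the full chain claimed.

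The main obstacle I anticipate is purely technical: justifying the use of constant test functions in $\boldsymbol{V}(\Omega)$, whose precise definition is only stated to be the completion of $\boldsymbol{H}^1(\Omega)$ by smooth functions (per Scott 1973), and making sense of the trace $\boldsymbol{u}_I\lvert_{\partial\Omega}$ even though $\boldsymbol{u}_I\notin\boldsymbol{H}^1(\Omega)$ by Theorem 2. Since the singular behavior of $\boldsymbol{u}_I$ is localized at $\partial\Omega_C$ (which is disjoint from $\partial\Omega$), $\boldsymbol{u}_I$ is smooth in a neighborhood of $\partial\Omega$ by interior elliptic regularity away from the support of the forcing, so the boundary integral on $\partial\Omega$ is well defined. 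Everything else, including the exchange of limit and integration on the right-hand side, reduces to uniform convergence of the midpoint quadrature for a continuous integrand on the compact curve $\partial\Omega_C$, which is routine.
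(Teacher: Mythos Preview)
Your proof is correct and follows essentially the same approach as the paper: the paper integrates the strong-form PDE over the domain and applies the divergence theorem before substituting the boundary conditions, which is exactly what testing the weak form against the constant vectors $\boldsymbol{e}_k$ accomplishes in one step. Your version is arguably cleaner and you are more careful about the regularity of $\boldsymbol{u}_I$ near $\partial\Omega$, but the underlying global force-balance idea is identical.
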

\noindent
\begin{proof}
To prove that the above equation holds true, we integrate the PDE of both approaches over the computational domain $\Omega$.

For the immersed boundary approach, we get 
$$-\int_{\Omega}\nabla\cdot\boldsymbol{\sigma}d\Omega=\int_{\Omega}\sum_{j=1}^{N_S^i}P(\boldsymbol{x}_j^i)\boldsymbol{n}(\boldsymbol{x}_j^i)\delta(\boldsymbol{x}-\boldsymbol{x}_j^i)\Delta\Gamma^{i,j}d\Omega,$$
then after applying Gauss Theorem in the LHS and simplifying the RHS, we obtain
$$-\int_{\partial\Omega}\boldsymbol{\sigma}\cdot\boldsymbol{n}d\Gamma=\sum_{j=1}^{N_S^i}P(\boldsymbol{x}^i_j)\boldsymbol{n}(\boldsymbol{x}_j^i)\Delta\Gamma^{i,j}.$$
By substituting the Robin's boundary condition and sending $N_S^i\rightarrow\infty$, i.e. $\Delta\Gamma^{i,j}\rightarrow0$, the equation becomes
\begin{equation}
\label{Eq_immersed_integral}
\int_{\partial\Omega}\kappa\boldsymbol{u_I}d\Gamma=\int_{\partial\Omega_C}P(\boldsymbol{x})\boldsymbol{n}(\boldsymbol{x})d\Gamma.
\end{equation}

Subsequently, we do the same thing for the 'hole approach'. Then, we get
$$-\int_{\Omega}\nabla\cdot\boldsymbol{\sigma}d\Omega=0,$$
and we apply Gauss Theorem:
$$-\int_{\partial\Omega\cup\partial\Omega_C}\boldsymbol{\sigma}\cdot\boldsymbol{n}d\Gamma=0,$$
which implies
$$-\int_{\partial\Omega}\boldsymbol{\sigma}\cdot\boldsymbol{n}d\Gamma-\int_{\partial\Omega_C}\boldsymbol{\sigma}\cdot\boldsymbol{n}d\Gamma=0.$$
Using the boundary conditions, we get
$$\int_{\partial\Omega}\kappa\boldsymbol{u_H}d\Gamma=\int_{\partial\Omega_C}P(\boldsymbol{x})\boldsymbol{n}(\boldsymbol{x})d\Gamma,$$
which is exactly the same as Eq (\ref{Eq_immersed_integral}). Hence we proved that
$$\int_{\partial \Omega} \kappa \boldsymbol{u_H} d \Gamma = \int_{\partial \Omega} \kappa \boldsymbol{u_I} d \Gamma = \int_{\partial \Omega_C} P({\bf x}) {\bf n}(\boldsymbol{x}) d \Gamma.$$ 
\end{proof}

Hence, the two different approaches are consistent in the sense of global conservation of momentum and therefore the results from both approaches should be comparable. The only difference between the two approaches is that the 'hole approach' does not consider the stiffness of the cell, since the cell is treated as a hole in the domain. The immersed boundary method contains the internal stiffness of the cell. Therewith, if the cell stiffness is sent to zero, the two formalisms should deliver the same results. Hereby, we are going to prove this transition mathematically and we will see that numerical computations indeed confirm this behaviour.

Before we state and prove a proposition that asserts the transition, we introduce the following energy norm:
\begin{definition}
Given $\boldsymbol{u}\in H^1(\Omega)$, then the energy norm is defined by 
$$\|\boldsymbol{u}\|_{E(\Omega)}=\left(\int_{\Omega}\boldsymbol{\sigma}(\boldsymbol{u}):\boldsymbol{\epsilon}(\boldsymbol{u})d\Omega+\int_{\partial \Omega}\kappa\boldsymbol{u}^2d\Gamma\right)^{1/2},$$
where $\kappa$ is a positive constant. Note that the energy norm is a proper norm according to the definition of norm in \cite{horn2012matrix}. 
\end{definition}

\begin{proposition}\label{prop_2}
Numerical approximations based on simplicial, continuous finite-element basis functions, to the weak forms of the immersed boundary approach in Equation (\ref{Eq_elas_immersed}) and the 'hole approach' in Equation (\ref{Eq_elas_hole}), yield the same results upon using the following stiffness for the immersed boundary approach 
\begin{equation}
E(\boldsymbol{x})=
\begin{cases}
E,\quad&\mbox{$\boldsymbol{x}\in\Omega\backslash\Omega_C$,}\\
0,\quad&\mbox{$\boldsymbol{x}\in\Omega_C$,}
\end{cases}
\label{Eq_stiffness}
\end{equation}
where $E$ is a constant, $\Omega_C$ is the cell region, $\Omega\backslash\Omega_C$ is the extracellular region and $\Omega_C$ is surrounded by $\Omega$.
\end{proposition}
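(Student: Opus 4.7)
The plan is to substitute the prescribed stiffness (\ref{Eq_stiffness}) into the weak form of the immersed boundary approach $(WF_I)$ in the limit $N_S^i\to\infty$ and show that it collapses exactly onto the weak form of the hole approach $(WF_H)$, both at the continuous level and, crucially, at the level of the finite-element linear system generated by simplicial continuous Lagrange bases.

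First I would observe that with $E(\boldsymbol{x})=0$ on $\Omega_C$, the constitutive relation (\ref{Eq_elas_sigma}) forces $\boldsymbol{\sigma}(\boldsymbol{u})\equiv\boldsymbol{0}$ throughout $\Omega_C$, independently of the local strain. Splitting the bilinear form $\int_{\Omega}\boldsymbol{\sigma}:\nabla\boldsymbol{\phi}\,d\Omega$ into contributions from $\Omega_C$ and $\Omega\setminus\Omega_C$, the former vanishes, and the right-hand side of $(WF_I)$ collapses in the limit to the line integral $\int_{\partial\Omega_C}P(\boldsymbol{x})\boldsymbol{n}(\boldsymbol{x})\boldsymbol{\phi}\,d\Gamma$. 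The resulting identity is precisely the continuous weak form $(WF_H)$, with the restriction of any $\boldsymbol{\phi}\in\boldsymbol{V}(\Omega)$ to $\Omega\setminus\Omega_C$ serving as an admissible test function in $\boldsymbol{H}^1(\Omega\setminus\Omega_C)$.

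Next, for the discrete statement I would invoke the standard assumption that the simplicial triangulation is chosen so that the polygonal curve $\partial\Omega_C$ is a union of element edges; this is always achievable for the cell geometries considered in the paper. The Lagrange basis functions then partition into three groups: nodes strictly in $\Omega\setminus\Omega_C$, nodes on $\partial\Omega_C$, and nodes strictly inside $\Omega_C$. Because every element $K\subset\Omega_C$ satisfies $\int_K\boldsymbol{\sigma}(\psi_i):\nabla\psi_j\,d\Omega=0$ for all basis pairs $(\psi_i,\psi_j)$, all stiffness-matrix contributions associated with interior-cell elements vanish identically; moreover, since the forcing is carried by $\partial\Omega_C$ alone, interior-cell nodes receive no load either. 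The block of the global linear system corresponding to exterior and boundary nodes is therefore identical to the linear system that arises from $(WF_H)$ assembled on $\Omega\setminus\Omega_C$, and the two numerical solutions must agree on $\Omega\setminus\Omega_C$.

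The main subtlety, and the step I would write out with most care, concerns the basis functions attached to nodes on $\partial\Omega_C$: their supports straddle the cell boundary, so one must verify that the $\Omega_C$-side elements really contribute nothing to the corresponding rows of the stiffness matrix. This is where piecewise constancy of $E(\boldsymbol{x})$ with respect to the mesh is essential. The interior-cell degrees of freedom are left undetermined, forming a decoupled zero block; they may be removed from the system or assigned arbitrarily without affecting the computed displacement on $\Omega\setminus\Omega_C$, so the conclusion of the proposition, interpreted as equality of the numerical solutions on the physically meaningful extracellular region, follows.
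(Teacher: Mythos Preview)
Your argument is correct and shares the same opening step as the paper: substituting $E=0$ on $\Omega_C$ kills the $\Omega_C$ contribution to the bilinear form, and the limit $N_S^i\to\infty$ turns the right-hand side into the line integral over $\partial\Omega_C$, so that $(WF_I)$ collapses to $(WF_H)$. From that point on, however, the two proofs diverge. The paper proceeds abstractly: it writes down the two discrete weak forms $(WF_{I'}^h)$ and $(WF_H^h)$, observes they are literally the same variational identity on $\boldsymbol{V}^h(\Omega)$, sets $\boldsymbol{v}^h=\boldsymbol{u}_I^h-\boldsymbol{u}_H^h$, tests with $\boldsymbol{\phi}^h=\boldsymbol{v}^h$, and invokes the energy norm $\|\cdot\|_{E(\Omega\setminus\Omega_C)}$ to conclude $\boldsymbol{v}^h=\boldsymbol{0}$. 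You instead work at the level of the assembled linear system: partition the Lagrange degrees of freedom into exterior, interface, and interior-cell nodes, and show that the exterior/interface block coincides with the hole-approach system while the interior-cell block decouples as a zero row/column block with zero load. Your route is more hands-on and buys an honest treatment of a point the paper glosses over: the discrete system for the adjusted immersed boundary formulation is singular, with the interior-cell degrees of freedom unconstrained, so equality of the two finite-element solutions can only be asserted on $\Omega\setminus\Omega_C$ --- exactly the qualification you make explicit. The paper's energy-norm argument is cleaner to write but its claim that $\|\cdot\|_{E(\Omega\setminus\Omega_C)}$ is a \emph{proper} norm on all of $\boldsymbol{V}^h(\Omega)$ tacitly ignores those same interior-cell modes. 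Your explicit mesh-conforming assumption on $\partial\Omega_C$ is also a useful clarification that the paper leaves implicit.
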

\noindent
\begin{proof}
Due to the symmetry of the tensor $\boldsymbol{\epsilon}(\boldsymbol{\phi})$, $\forall\boldsymbol{\phi}$, it follows that
$$\int_{\Omega}\boldsymbol{\sigma}(\boldsymbol{u}):\nabla\boldsymbol{\phi}d\Omega=\int_{\Omega}\boldsymbol{\sigma}(\boldsymbol{u}):\boldsymbol{\epsilon}(\boldsymbol{\phi})d\Omega.$$
Hence, rewriting the weak form of the immersed boundary approach taking $N_S^i\rightarrow\infty$, i.e. $\Delta\Gamma^{i,j}\rightarrow0$,  $(WF_I)$ becomes
\[\left\{
\begin{aligned}
&\text{Find $\boldsymbol{u} \in {\bf V}(\Omega)$, such that}\\ &\int_{\partial\Omega}\kappa\boldsymbol{u\phi}d\Gamma+\int_{\Omega}\boldsymbol{\sigma}({\bf u}):\boldsymbol{\epsilon}(\boldsymbol{\phi})d\Omega=\int_{\partial\Omega_C}P(\boldsymbol{x})\boldsymbol{n}(\boldsymbol{x})\boldsymbol{\phi}(\boldsymbol{x})d\Gamma,\\
&\text{for all $\boldsymbol{\phi} \in {\bf V}(\Omega)$.}
\end{aligned}	
\right.\]
Substituting Eq (\ref{Eq_stiffness}) into the above weak form, implies that $$\int_{\Omega}\boldsymbol{\sigma}(\boldsymbol{u}):\boldsymbol{\epsilon}(\boldsymbol{\phi})d\Omega=\int_{\Omega\backslash\Omega_C}\boldsymbol{\sigma}(\boldsymbol{u}):\boldsymbol{\epsilon}(\boldsymbol{\phi})d\Omega.$$ Hence, the weak form for the adjusted immersed boundary approach, denoted by $(WF_{I'})$ is given by:
\[(WF_{I'})\left\{
\begin{aligned}
&\text{Find $\boldsymbol{u}\in{\bf V}(\Omega)$, such that}\\ &\int_{\partial\Omega}\kappa\boldsymbol{u\phi}d\Gamma+\int_{\Omega\backslash\Omega_C}\boldsymbol{\sigma}(\boldsymbol{u}):\boldsymbol{\epsilon}(\boldsymbol{\phi})d\Omega
&=\int_{\partial\Omega_C}P(\boldsymbol{x})\boldsymbol{n}(\boldsymbol{x})\boldsymbol{\phi}(\boldsymbol{x})d\Gamma,\\
&\text{for all $\boldsymbol{\phi}\in{\bf V}(\Omega)$.}
\end{aligned}	
\right.\]
Recalling the weak form of the 'hole approach':
\[ (WF_{H})\left\{
\begin{aligned}
&\text{Find $\boldsymbol{u}\in \boldsymbol{H}^1(\Omega\backslash\Omega_C)$, such that}\\ &\int_{\partial\Omega}\kappa\boldsymbol{u\phi}d\Gamma+\int_{\Omega\backslash\Omega_C}\boldsymbol{\sigma}(\boldsymbol{u}):\boldsymbol{\epsilon}(\boldsymbol{\phi})d\Omega=\int_{\partial\Omega_C}P(\boldsymbol{x})\boldsymbol{n}(\boldsymbol{x})\boldsymbol{\phi}d\Gamma,\\
&\text{for all $\boldsymbol{\phi}\in \boldsymbol{H}^1(\Omega\backslash\Omega_C)$.}
\end{aligned}	
\right.\]
We are aware that due to the singularity caused by Dirac Delta distributions in the immersed boundary approach, the solution is no longer in $\boldsymbol{H}^1(\Omega)$. Therefore, following the procedure of discretizing the continuous function space in \cite{bertoluzza2018local}, we approximate the solution by the finite element space $\boldsymbol{V}^h(\Omega)\subset \boldsymbol{H}^1(\Omega)$, such that the solution of $(WF_{I'})$ can be found in this subset  that consists of simplex-based basis functions that are continuous. Subsequently, $(WF_{I'})$ is given by 
\[(WF_{I'}^h)\left\{
\begin{aligned}
&\text{Find $\boldsymbol{u}^h\in\boldsymbol{V}^h(\Omega)$, such that}\\ &\int_{\partial\Omega}\kappa\boldsymbol{u}^h\boldsymbol{\phi}^hd\Gamma+\int_{\Omega\backslash\Omega_C}\boldsymbol{\sigma}(\boldsymbol{u}^h):\boldsymbol{\epsilon}(\boldsymbol{\phi}^h)d\Omega
=\int_{\partial\Omega_C}P(\boldsymbol{x})\boldsymbol{n}(\boldsymbol{x})\boldsymbol{\phi}^h(\boldsymbol{x})d\Gamma,\\
&\text{for all $\boldsymbol{\phi}^h\in\boldsymbol{V}^h(\Omega)$.}
\end{aligned}	
\right.\]
Applying the same discretizing procedure on the weak form of the 'hole approach', we derive the updated weak form as follows:
\[ (WF_H^h)\left\{
\begin{aligned}
&\text{Find $\boldsymbol{u}^h\in \boldsymbol{V}^h(\Omega)$, such that}\\ &\int_{\partial\Omega}\kappa\boldsymbol{u}^h\boldsymbol{\phi}^hd\Gamma+\int_{\Omega\backslash\Omega_C}\boldsymbol{\sigma}(\boldsymbol{u}^h):\boldsymbol{\epsilon}(\boldsymbol{\phi}^h)d\Omega=\int_{\partial\Omega_C}P(\boldsymbol{x})\boldsymbol{n}(\boldsymbol{x})\boldsymbol{\phi}^h(\boldsymbol{x})d\Gamma,\\
&\text{for all $\boldsymbol{\phi}^h\in \boldsymbol{V}^h(\Omega)$.}
\end{aligned}	
\right.\]

Note that the above weak forms are identical. Next we demonstrate that the solutions are necessarily the same
(hence not determined up to a function or a constant).
Since we want to prove the consistency of these two approaches, we rewrite $\boldsymbol{u}^h$ in $(WF_{I'}^h)$ into $\boldsymbol{u}_I^h$ and $\boldsymbol{u}_H^h$ in $(WF_H^h)$. Denoting $\boldsymbol{v}^h=\boldsymbol{u}_I^h-\boldsymbol{u}_H^h$ and subtracting the equations in both weak forms, using linearity the weak form for $\boldsymbol{v}^h$ is   
\[ (WF_v^h)\left\{
\begin{aligned}
&\text{Find $\boldsymbol{v}^h\in\boldsymbol{V}^h(\Omega)$, such that}\\ &\int_{\Omega\backslash\Omega_C}\boldsymbol{\sigma}(\boldsymbol{v}^h):\nabla\boldsymbol{\phi}^hd\Omega+\int_{\partial\Omega}\kappa\boldsymbol{v}^h\boldsymbol{\phi}^hd\Gamma =0\\
&\text{for all $\boldsymbol{\phi}^h\in\boldsymbol{V}^h(\Omega)$ and $\alpha\geqslant0$.}
\end{aligned}	
\right.\]
Since $\boldsymbol{\phi}^h$ is a test function, which we can choose freely, such that the provided integrals make sense; we choose $\boldsymbol{\phi}^h=\boldsymbol{v}^h$. The equation in weak form $(WF_v)$ becomes
\begin{equation*}
\begin{aligned}
\int_{\Omega\backslash\Omega_C}\boldsymbol{\sigma}(\boldsymbol{v}^h):\boldsymbol{\epsilon}(\boldsymbol{v}^h)d\Omega+\int_{\partial\Omega}\kappa\|\boldsymbol{v}^h\|^2d\Gamma = \| \boldsymbol{v}^h \|^2_{E(\Omega \setminus \Omega_C)}= 0.
\end{aligned}
\end{equation*}

Since the energy norm is a proper norm, 
it can be concluded that $$\boldsymbol{v}^h=\boldsymbol{0},\quad\mbox{in $\Omega$.}$$ 
Hence, we have proved 
$\boldsymbol{u}_I^h = \boldsymbol{u}_H^h$ in $\Omega$.
\end{proof}

In Proposition \ref{prop_2}, we have proved the convergence between the finite element solutions to the adjusted immersed boundary approach and the 'hole approach'. Next to it, we are going to prove the convergence between the finite element solution to the adjusted immersed boundary approach and the (exact) solution to the 'hole approach'.

\begin{proposition}
Let $\boldsymbol{u}_I$, $\boldsymbol{u}_I^h$, $\boldsymbol{u}_H$, $\boldsymbol{u}_H^h$, respectively, be the (exact) solution to $(WF_{I'})$, the finite element solution to $(WF_{I'}^h)$, the (exact) solution to $(WF_H)$, and the finite element solution to $(WF_H^h)$. Suppose that the finite element error between $\boldsymbol{u}_I^h$ and $\boldsymbol{u}_H^h$ 
satisfies (i.e. the finite element method converges as the element size is sent to zero ($h \rightarrow 0$)):
\begin{equation}
\label{Eq_fem_error}
\|\boldsymbol{u}_H-\boldsymbol{u}_H^h\|_{E(\Omega\backslash\Omega_C)} \rightarrow 0, 
\text{ as } h \rightarrow 0.
\end{equation}
Then, 
$$\|\boldsymbol{u}_H-\boldsymbol{u}_I^h\|_{E(\Omega\backslash\Omega_C)} \longrightarrow 0, 
\text{ as } h \rightarrow 0.$$ Hence, $\boldsymbol{u}_I^h\rightarrow\boldsymbol{u}_H$, as $h\rightarrow0$.
\end{proposition}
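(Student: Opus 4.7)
The statement is essentially a corollary of Proposition \ref{prop_2}, so my plan is to chain that identity with the given convergence hypothesis. The key observation is that Proposition \ref{prop_2} asserts $\boldsymbol{u}_I^h = \boldsymbol{u}_H^h$ pointwise in $\Omega$ (when the adjusted stiffness \eqref{Eq_stiffness} is used in the immersed boundary formulation and both weak forms are discretized over the same finite element space $\boldsymbol{V}^h(\Omega)$). This identity is exact at the discrete level, so I can swap one for the other inside any norm or functional.

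The plan is to apply the triangle inequality and then collapse the trivial term. First I would write
$$\|\boldsymbol{u}_H-\boldsymbol{u}_I^h\|_{E(\Omega\setminus\Omega_C)} \leqslant \|\boldsymbol{u}_H-\boldsymbol{u}_H^h\|_{E(\Omega\setminus\Omega_C)} + \|\boldsymbol{u}_H^h-\boldsymbol{u}_I^h\|_{E(\Omega\setminus\Omega_C)}.$$
By Proposition \ref{prop_2}, the second term vanishes identically because $\boldsymbol{u}_I^h - \boldsymbol{u}_H^h = \boldsymbol{0}$ in $\Omega$ (and in particular on $\Omega\setminus\Omega_C$ and its boundary contribution). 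The first term tends to zero as $h \to 0$ by hypothesis \eqref{Eq_fem_error}. Combining these two facts yields the desired limit, and hence $\boldsymbol{u}_I^h \to \boldsymbol{u}_H$ in the energy norm on $\Omega\setminus\Omega_C$.

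Strictly speaking there is nothing difficult here; the only minor subtlety is that the energy norm on $\Omega \setminus \Omega_C$ includes a boundary term on $\partial \Omega$, and we need $\partial \Omega \subset \overline{\Omega\setminus\Omega_C}$ (which holds since $\Omega_C$ is interior to $\Omega$), so the boundary integral over $\partial \Omega$ in the energy norm is well-defined for functions in $\boldsymbol{H}^1(\Omega\setminus\Omega_C)$. The main ``obstacle,'' if one can call it that, is purely bookkeeping: making sure that $\boldsymbol{u}_I^h$ restricted to $\Omega\setminus\Omega_C$ is compared against $\boldsymbol{u}_H$ in a norm that is defined for both, which is guaranteed by restriction since $\boldsymbol{V}^h(\Omega)$ embeds naturally into $\boldsymbol{H}^1(\Omega\setminus\Omega_C)$. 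No deeper estimate (Céa lemma, interpolation error, etc.) is needed because Proposition \ref{prop_2} already supplies the discrete equivalence that makes the proof collapse to a single application of the hypothesis.
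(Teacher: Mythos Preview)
Your proposal is correct and follows essentially the same approach as the paper: apply the triangle inequality to split $\|\boldsymbol{u}_H-\boldsymbol{u}_I^h\|_{E(\Omega\setminus\Omega_C)}$ into $\|\boldsymbol{u}_H-\boldsymbol{u}_H^h\|_{E(\Omega\setminus\Omega_C)}+\|\boldsymbol{u}_H^h-\boldsymbol{u}_I^h\|_{E(\Omega\setminus\Omega_C)}$, kill the second term via Proposition~\ref{prop_2}, and send the first to zero by hypothesis~\eqref{Eq_fem_error}. Your additional remarks on the boundary term and the restriction to $\Omega\setminus\Omega_C$ are sound but go slightly beyond what the paper records.
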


\begin{proof}
Since the energy norm is a proper norm, we apply the triangle inequality and obtain 
\begin{equation*}
\left.
\begin{aligned}
\|\boldsymbol{u}_H-\boldsymbol{u}_I^h\|_{E(\Omega\backslash\Omega_C)}&= \|\boldsymbol{u}_H-\boldsymbol{u}_H^h+\boldsymbol{u}_H^h-\boldsymbol{u}_I^h\|_{E(\Omega\backslash\Omega_C)}\\
&\leqslant \|\boldsymbol{u}_H-\boldsymbol{u}_H^h\|_{E(\Omega\backslash\Omega_C)}+\|\boldsymbol{u}_H^h-\boldsymbol{u}_I^h\|_{E(\Omega\backslash\Omega_C)}.
\end{aligned}
\right.	
\end{equation*}
From Proposition \ref{prop_2},
$$\| \boldsymbol{u}_H^h - \boldsymbol{u}_I^h \|_{E(\Omega \setminus \Omega_C)} = 0,$$
and combined with the finite element error stated in Eq (\ref{Eq_fem_error}), we obtain 
$$\|\boldsymbol{u}_H-\boldsymbol{u}_I^h\|_{E(\Omega\backslash\Omega_C)} \rightarrow 0, 
\text{ as } h \rightarrow 0,$$
which confirms the convergence between $\boldsymbol{u}_I^h$ and $\boldsymbol{u}_H$, as $h\rightarrow0$.
\end{proof}

\begin{remark}
For the homogeneous Dirichlet boundary condition, all three propositions can be proved analogously.
\end{remark}

\subsection{Polygonal Cell Approach}
\noindent
If we consider a domain in which many cells are moving and exerting forces, then the aforementioned two approaches will be very expensive from a computational point of view. Therefore, we will simplify the cell boundary to a low-order polygon, such as to a triangle or square. Furthermore, if the cell size is smaller than the mesh size, then we cannot break the cell boundary into finite segments by the mesh for both approaches. Inspired by finite boundary segments which actually build up a polygon, we can simulate the circular cell by different kinds of polygons. 

Eq (\ref{Eq_elas_immersed}) is still used as the basis for the computation of the forces that are exerted by the cells. However, we study the use of just a few boundary segments per cell in such a way that the total force exerted by the cell is the same regardless the order of the polygon.

The cells exert forces on their immediate environment and hence all the points of the computational domain will be displaced. The displacement vector will induce a contraction of the near cell region. This contraction is quantified by the area of the near-cell region. According to \cite{lubliner2008plasticity}, for each nodal point, the new position is $$\boldsymbol{x}(t)=\boldsymbol{X}+\boldsymbol{u}(\boldsymbol{x}(t),t),$$ where $\boldsymbol{X}$ stands for the initial position and $\boldsymbol{x}(t)$ is the position at time $t$. Defining the gradient matrix of displacement $\boldsymbol{J}=\nabla_{\boldsymbol{X}}\boldsymbol{u},$ the matrix notation can be worked out as 
\begin{equation}
d\boldsymbol{x}=\FS{\partial\boldsymbol{x}}{\partial\boldsymbol{X}}d\boldsymbol{X}=(\boldsymbol{I}+\nabla_{\boldsymbol{X}}\boldsymbol{u})d\boldsymbol{X}=(\boldsymbol{I}+\boldsymbol{J})d\boldsymbol{X},
\label{Eq_mechan_displacement}
\end{equation}
where $\FS{\partial\boldsymbol{x}}{\partial\boldsymbol{X}}$ is the Jacobian matrix. The volume can be calculated by:
\begin{equation}
d\boldsymbol{x}=\det(\boldsymbol{I}+\boldsymbol{J})d\boldsymbol{X},
\label{Eq_mechan_area}
\end{equation}
that is, theoretically
\begin{equation}
A_\Omega=\int_{\Omega_0}\det(\boldsymbol{I}+\boldsymbol{J})d\boldsymbol{X},
\label{Eq_mechan_area_J}
\end{equation}
where $\Omega_0$ is the initial domain. 

However, to compute the area in Eq (\ref{Eq_mechan_area_J}) numerically, we need to apply quadratures like Newton-C\^otes quadrature or Gaussian quadrature, which increase the computation expense if we want to track the area at each iteration. Thus, to improve the computational efficiency, another possibility to compute the area of $\Omega$ is based on connecting all the nodal points on the boundary to build up a polygon. Then this polygonal area is an approximation of the deformed area since the displacement of each nodal point is available. To calculate the polygon area, one can use shoelace method derived by \cite{meister1769generalia} in 1769. Suppose we have a polygon with $n$ vertices, then the area is calculated by
\begin{equation}
A_{\Omega} \approx A_{SL}=\FS{1}{2}\|\sum_{i=1}^{n}(x_iy_{i+1}-x_{i+1}y_i)\|,
\label{Eq_mechan_area_SL}
\end{equation}
where $(x_i,y_i)$, $i=1,\dots,n$ is the coordinate of vertex $i$ and $(x_{n+1},y_{n+1})=(x_1,y_1)$. Note that the vertices should be sorted in counter clockwise or clockwise direction.

To have a better insight of how these different computational approaches affect the cell and the near-cell region, we calculate the reduction of the area with respect to the initial area. If we denote the area after deformation by $A_\Omega$ and the original area is $A^0_\Omega$, then the ratio is calculated by
\begin{equation} 
r=\FS{\|A_\Omega-A^0_\Omega\|}{A^0_\Omega}.
\label{Eq_ratio}
\end{equation}

\section{Numerical Results}\label{Results}
\subsection{The Immersed Boundary Approach and The 'Hole Approach'}
\noindent
We use the finite element method to analyse the performance of the immersed boundary approach and 'hole approach'. Since we are interested in the behaviour of the solution in the vicinity of the positions where point forces are exerted, we introduce a subdomain $\Omega_w$ near the locations where the point sources are exerted. This near-by subdomain, as well as the entire computational domain and the circular line where the forces are exerted are shown in Figure \ref{Fig_mesh}. The meshes for the two approaches are the same, except for the use of a 'hole' in the hole-approach. The circular curve where the forces are applied models a cell boundary, with its inner region modelling a myofibroblast that exerts forces on its direct environment. 
\begin{figure}[htpb]
\subfigure[The immersed boundary approach]{
	\includegraphics[width=0.47\textwidth]{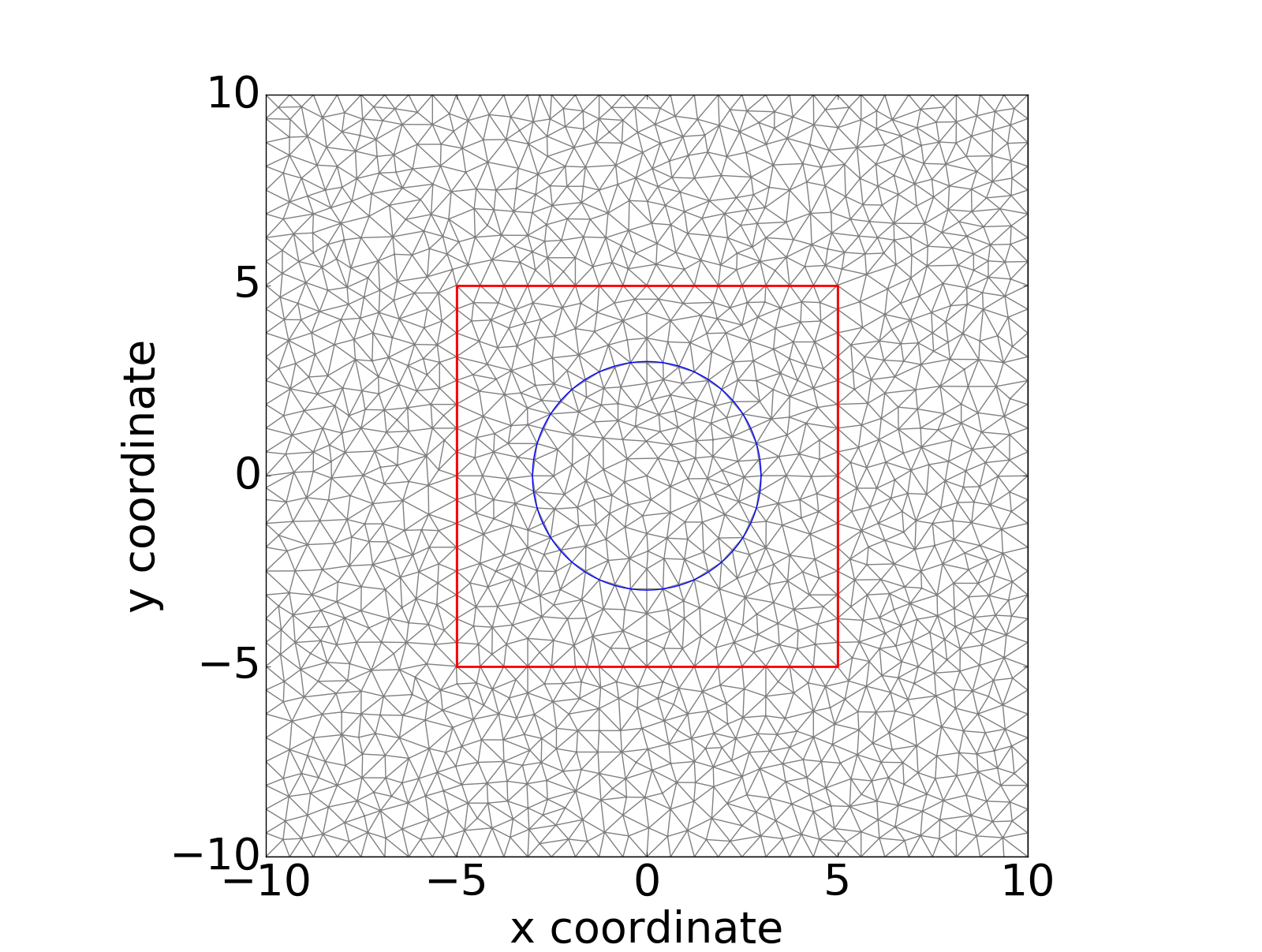}
	\label{fig_delta_mesh}}	
\subfigure[The 'hole approach']{
	\includegraphics[width=0.47\textwidth]{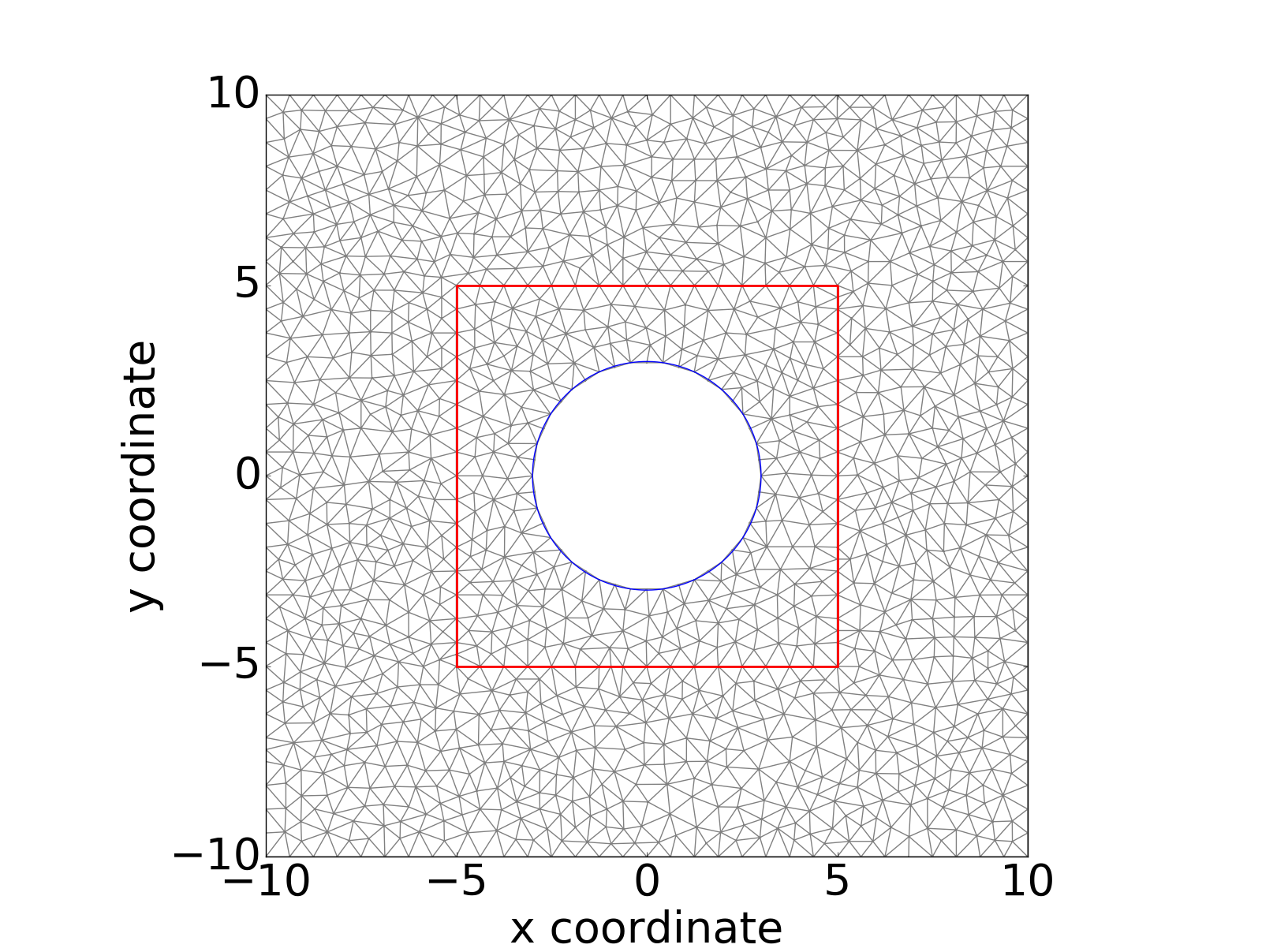}
	\label{fig_hole_mesh}}
\caption{Two subplots show the mesh used for the immersed boundary approach and 'hole approach'. We use ${(-10,10)\times(-10,10)}$ as computational domain, ${(-5,5)\times(-5,5)}$ as near-cell region domain of which the boundary is marked with red lines and the cell is drawn in blue}
\label{Fig_mesh}
\end{figure} 

The values of the parameters used in this simulation have been listed in Table \ref{Tbl_ParaValue}. Note that all these parameter values are only for testing the sensitivity of the approaches.
\begin{table}[htpb]\footnotesize
\centering
\caption{Parameter values}
\begin{tabular}{m{2cm}<{\centering}m{4cm}<{\centering}m{1cm}<{\centering}m{2.5cm}<{\centering}}
	\hline\noalign{\smallskip}
	{\bf Parameter}& {\bf Description} & {\bf Value} & {\bf Dimension} \\
	\noalign{\smallskip}\hline\noalign{\smallskip}
	$E$ & Substrate elasticity & $1$ & $kg/(\mu m\cdot min^2)$\\
	$P$ & Magnitude of the force exerted by the cell & $1$ & $kg\cdot\mu m/min^2$ \\
	$R$ & Cell radius & $3$ & $\mu m$ \\
	$\kappa$ & Boundary condition coefficient & $10$ & $-$ \\
	$\nu$ & Poisson's ratio  & $0.49$ & $-$ \\ 
	\noalign{\smallskip}\hline
\end{tabular}
\label{Tbl_ParaValue}
\end{table}

We compare the results from the immersed boundary approach to the results from the 'hole approach'. Figure \ref{Fig_immersed_hole} displays the initial cell in blue and the nearby region which is included in the red square, as well as its deformations in black curves. It can be seen that there is a large difference between the results from the two approaches. In particular, the magnitude of the displacement from the 'hole approach' is more than $13$ times as large as the displacement from the immersed boundary approach. This discrepancy is caused by the interaction with the region inside the circular cell, which is incorporated in the immersed boundary approach and not in the 'hole approach'. Therefore, we adjust the stiffness of the region inside the circular cell to zero, by Eq (\ref{Eq_stiffness}). However, rather than setting the stiffness modulus to zero inside the cell in implementation, we set the cell stiffness modulus to a small positive constant:
\begin{equation}
E(\boldsymbol{x})=
\begin{cases}
E,\quad&\mbox{$\boldsymbol{x}\in\Omega\backslash\Omega_C$,}\\
\gamma,\quad&\mbox{$\boldsymbol{x}\in\Omega_C$,}
\end{cases}
\label{Eq_ad_stiffness}
\end{equation} 
where $\gamma$ is a small positive constant. In the following contents about the adjusted immersed boundary approach, we use $\gamma=10^{-5}$ if there is no further declaration. Then we redo the simulations and plot the results in Figure \ref{Fig_ad_immersed_hole}. The results of area and total strain energy in the subdomain $\Omega_w$ have been documented in Table \ref{Tbl_ad_immersed_hole}, and as a result of the use of Eq (\ref{Eq_stiffness}), it can be seen that the 'hole approach' and the adjusted immersed boundary approach are consistent since the area reductions are less than a percent. Further, it can be observed that the order of accuracy of the 'hole approach' is slightly better, whereas the adjusted immersed boundary approach is about a factor of four more economical from a computational efficiency point of view. 

\begin{figure}[htpb]
\centering
\subfigure[The immersed boundary approach]{
	\includegraphics[width=0.48\textwidth]{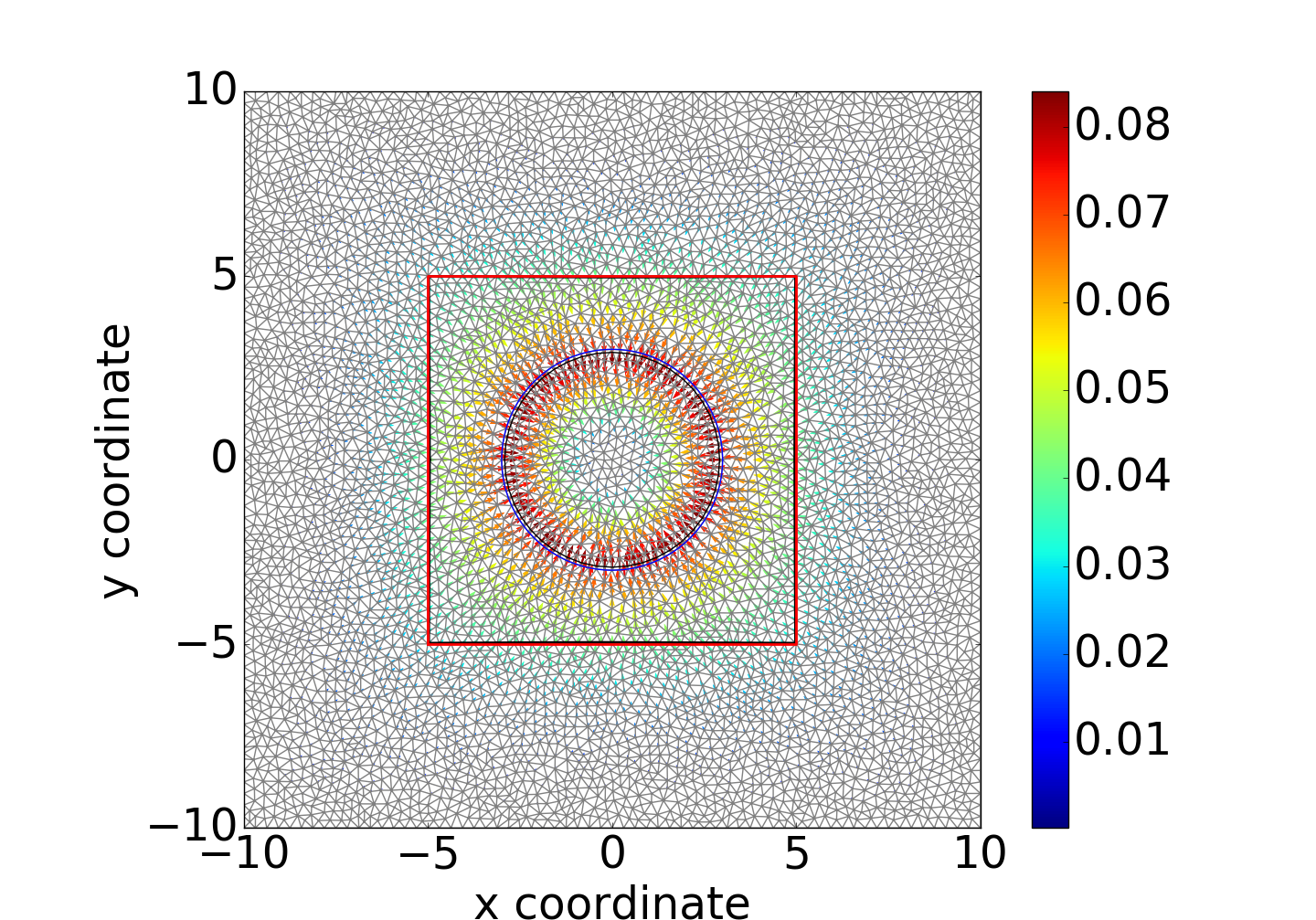}}
\label{fig_immersed}
\subfigure[The 'hole approach']{
	\includegraphics[width=0.48\textwidth]{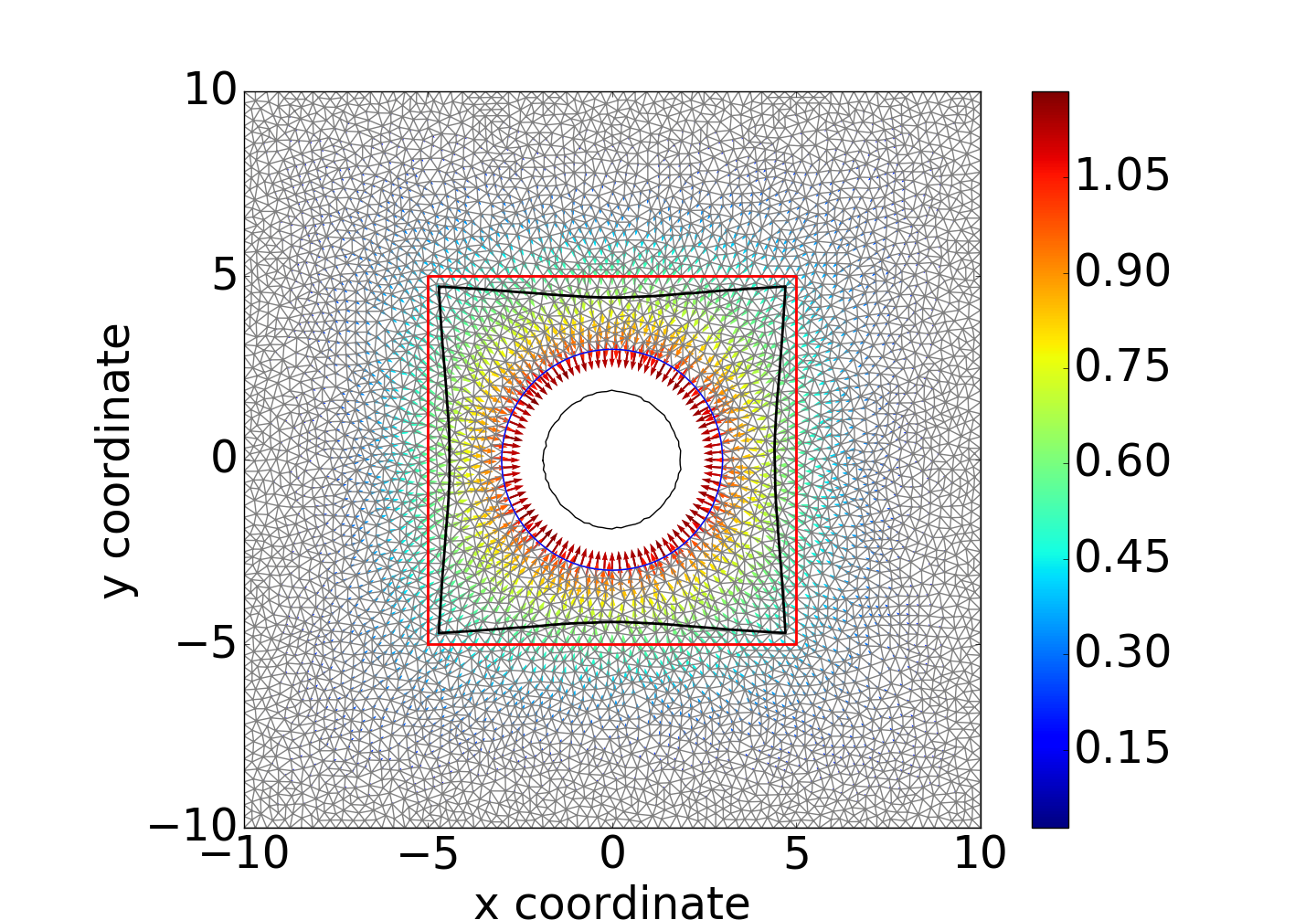}}
\label{fig_hole}
\caption{Displacement results of the immersed boundary approach (Eq (\ref{Eq_elas_immersed})) and the 'hole approach' (Eq (\ref{Eq_elas_hole})) when the same mesh structure used except the hole and the same parameter values applied (Table \ref{Tbl_ParaValue}). The black line shows the deformed cell and $\Omega_w$ and the other colour lines represent the original status}
\label{Fig_immersed_hole}
\end{figure}

\begin{figure}[htpb]
\centering
\subfigure[The adjusted immersed boundary approach]{
	\includegraphics[width=0.48\textwidth]{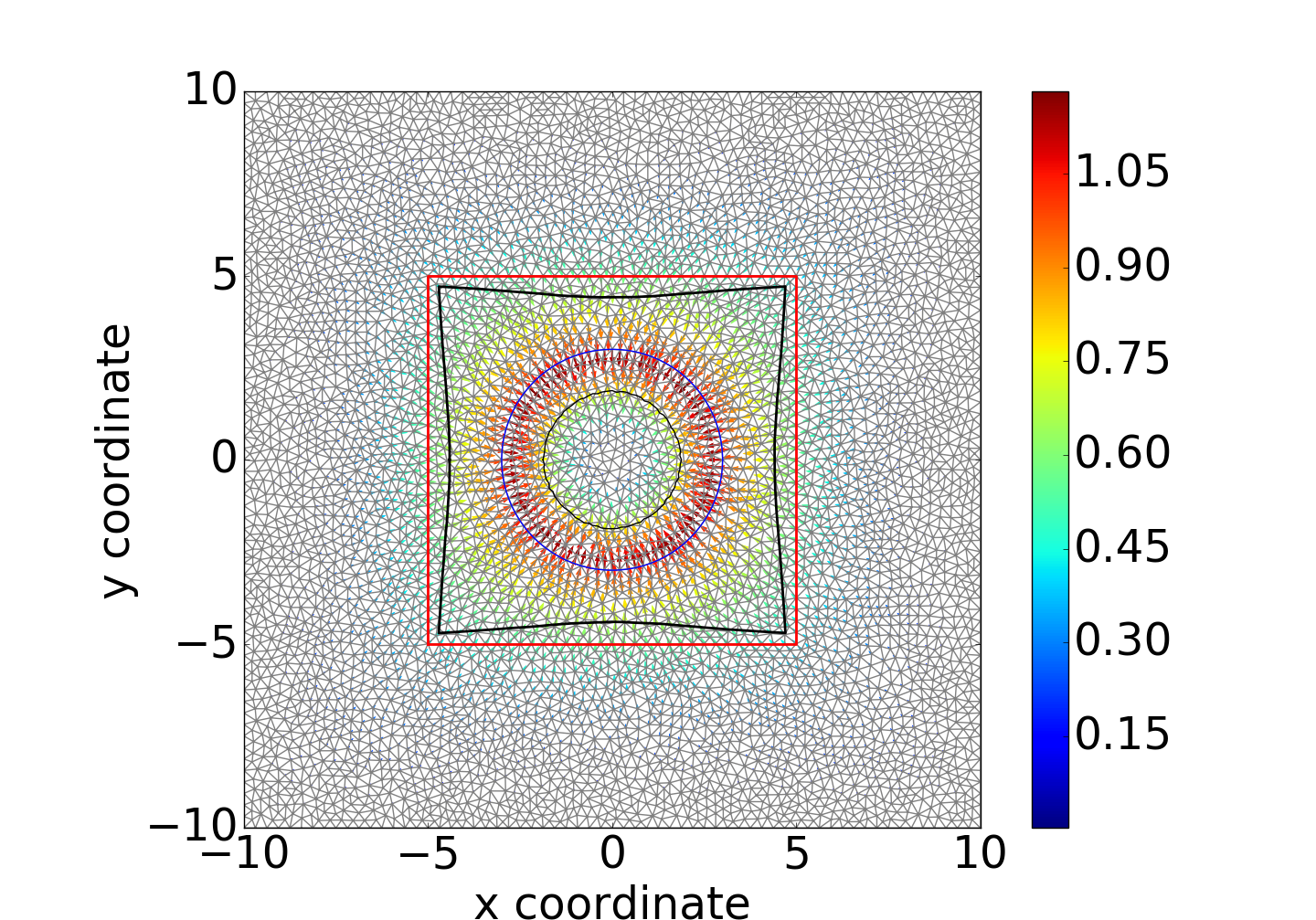}}
\subfigure[The 'hole approach']{
	\includegraphics[width=0.48\textwidth]{hole_cell_r3_60.png}}
\caption{Displacement results of the adjusted immersed boundary approach (Eq (\ref{Eq_elas_immersed}) and Eq(\ref{Eq_ad_stiffness})) and the 'hole approach' (Eq (\ref{Eq_elas_hole})) when the same mesh structure used except the hole and the same parameter values applied (Table \ref{Tbl_ParaValue}). The black line shows the deformed cell and $\Omega_w$ and the other colour lines represent the original status. }
\label{Fig_ad_immersed_hole}
\end{figure}

\begin{table}[htpb]\footnotesize
\centering
\caption{The percentage of area change of cell and vicinity region, and time cost of three approaches}
\begin{tabular}{m{5cm}<{\centering}m{4cm}<{\centering}m{4cm}<{\centering}}
	\hline\noalign{\smallskip}
	& {\bf The immersed boundary approach}& {\bf The 'hole approach'} \\
	\noalign{\smallskip}\hline\noalign{\smallskip}
	Cell Area Reduction Ratio(\%) & $61.92051$ &  $61.92605$ \\
	$\Omega_w$ Area Reduction Ratio(\%) & $17.50153$ & $17.52235$ \\
	Convergence Rate of Strain Energy in $\Omega_w$ &  $1.70656$ & $2.0647$\\
	Time Cost$(s)$ & $1.99139$ & $8.71979$\\
	\noalign{\smallskip}\hline
\end{tabular}
\label{Tbl_ad_immersed_hole}
\end{table}

Due to multiple choices of $\gamma$, the value of $\gamma$ determines the accuracy and convergence of the adjusted immersed boundary approach. In this manuscript, to investigate the effect of $\gamma$, it varies from $10^{-6}$ to $10^{-3}$ with steps of a factor of $10$. In Table \ref{Tbl_ad_immersed_gamma_hole}, besides the area reduction, the convergence rate of the $L_2$-norm of the solution and the total strain energy in $\Omega_w$ are shown. It can be concluded that the value of $\gamma$ does have a modest impact in the current range, and the influences on various categories are distinct. In other words, for the area reduction, it is verified that the smaller value $\gamma$ is, the closer the result is to the one in 'hole approach'. Nevertheless, there is 'bell shape' behaviour appearing for the convergence rate of $\boldsymbol{\|u\|}_{L_2}$, although the differences are not strikingly large. Further, we observed that, in the perspective of the strain energy in $\Omega_w$, the larger $\gamma$ is, the better the convergence rate.  

\begin{table}[htpb]\footnotesize
\centering
\caption{Numerical results of the adjusted immersed boundary approach and the 'hole approach' with multiple choices of $\boldsymbol{\gamma}$}
\begin{tabular}{m{2cm}<{\centering}m{0.5cm}<{\centering}m{3cm}<{\centering}m{3cm}<{\centering}m{3cm}<{\centering}}
	\hline\noalign{\smallskip}
	{\bf Approach} & $\boldsymbol{\gamma}$ & {\bf The Percentage of area reduction(\%)} & {\bf Convergence rate of $\|\boldsymbol{u}\|_{L_2}$} & {\bf Convergence rate of $\int_{\Omega_w}1/2\times\boldsymbol{\sigma}(\boldsymbol{u}):\boldsymbol{\epsilon}(\boldsymbol{u})d\Omega$} \\
	\noalign{\smallskip}\hline\noalign{\smallskip}
	{\bf The 'hole approach'} & $-$ & $17.49741928$ & $1.978019816$ & $2.064701439$  \\
	\midrule
	{\bf The adjusted} & $10^{-3}$ & $17.29570621$ & $1.882445881$ & $1.929776181$ \\
	{\bf immersed} & $10^{-4}$ & $17.48242014$ & $1.984418004$ & $1.704289701$ \\
	{\bf boundary} & $10^{-5}$ & $17.49936018$ & $1.984324634$ & $1.706561293$ \\
	{\bf approach} & $10^{-6}$ & $17.50084960$ & $1.769210872$ & $1.583005166$ \\
	\noalign{\smallskip}\hline
\end{tabular}
\label{Tbl_ad_immersed_gamma_hole}
\end{table}

\subsection{Polygonal Cell Approach}
In the applications that we study, we are interested in multiple cells that are migrating through the computational domain. In typical situations, the cell size is much smaller than the domain size and the cell size could even be smaller than the element size from the discretization. Therefore, it is expensive from a computational point of view to divide the cell boundary into many mesh points and line segments in these applications. Hence, we are interested in the numerical accuracy if each cell is approximated by a simple polygon like a triangle or square instead of a high order polygon. In the presence of multiple small cells, we will study the impact of the polygonal order on the numerical results. The values of the input parameters are given in Table \ref{Tbl_ParaValue_new}. 
\begin{table}[htpb]\footnotesize
\centering
\caption{Parameter values}
\begin{tabular}{m{2cm}<{\centering}m{4cm}<{\centering}m{1cm}<{\centering}m{2.5cm}<{\centering}}
	\hline\noalign{\smallskip}
	{\bf Parameter}& {\bf Description} & {\bf Value} & {\bf Dimension} \\
	\noalign{\smallskip}\hline\noalign{\smallskip}
	$E$ & Substrate elasticity & $1$ & $kg/(\mu m\cdot min^2)$\\
	$P$ & Magnitude of the force exerted by the cell & $10$ & $kg\cdot\mu m/min^2$ \\
	$R$ & Cell radius & $0.1$ & $\mu m$ \\
	$\kappa$ & Boundary condition coefficient & $10$ & $-$ \\
	$\nu$ & Poisson's ratio  & $0.49$ & $-$ \\
	$\lambda$ & Parameter in Point Poisson Process of cells& $15$ & $-$ \\   
	\noalign{\smallskip}\hline
\end{tabular}
\label{Tbl_ParaValue_new}
\end{table}

In the multi-cell simulations, we locate the cells according to a Point Poisson Process with rate parameter $\lambda$, where we choose $\lambda = 15$ from \cite{krieger2013age}. The cell radius has been scaled down to $0.1$ of the radius in the previous calculations. The computational domain and the near-cell region are the same as in the earlier simulations. In order to visualize the deformation of the cell and the subdomain $\Omega_w$, we set the magnitudes of the forces exerted by the cells to $10$. In the simulations, we use the immersed boundary method with low order polygonal approximations of the circular cells. We investigate the performance in terms of the numerical solution with respect to the degree of polygons. An example of a simulation is shown in Figure \ref{Fig_multi_eqarea_sq}, where multiple cells are shown as circles, and the contraction of the region is shown. The cell size is smaller than the mesh size, so we applied the polygonal cell approach here to investigate the area reduction of the region.

The numerical numbers that we investigate are the area reduction due to the pulling forces exerted by the cells and the computation time. In all the calculations where we vary the degree of the polygonal approximation of the cells, we use the same number of cells and the same positions of the centres of the cells. Upon increasing the degree of the polygon, one gradually converges to a circle. In the current computations, we use a maximum number of eight nodes on the cells, that is, we use octagons as the highest polygonal order. The smallest order of polygonal approximation is the triangular shape. We selected the polygons such that the area of each cell is equal in all simulation runs as well as the centres of the cells. 

Figure \ref{Fig_multi_eqarea_ratio_time} displays the computation time and relative reduction of area as a function of polygonal degree with multiple cells. Lower order of polygonal approximation admits the advantage that computation time can be reduced due to a lower number of function evaluations from point forces. In the computations, it has turned out that the use of triangles gave a reduction of computation time of roughly fifty percent with respect to the octagonal representation of the cell boundaries according to the histogram in Figure \ref{Fig_multi_eqarea_ratio_time}. The dash line in Figure \ref{Fig_multi_eqarea_ratio_time} shows that a triangle or square representation of the circles already reproduces the results of the octagonal representation very well, since there is tiny fluctuation. In one word, due to the efficient computation time and good reproduction of the octagonal results in area reduction, we recommend to approximate the cell boundary by a triangle or square if a large number of small cells are used.

\begin{figure}[htpb]
\centering
\includegraphics[width=0.7\textwidth]{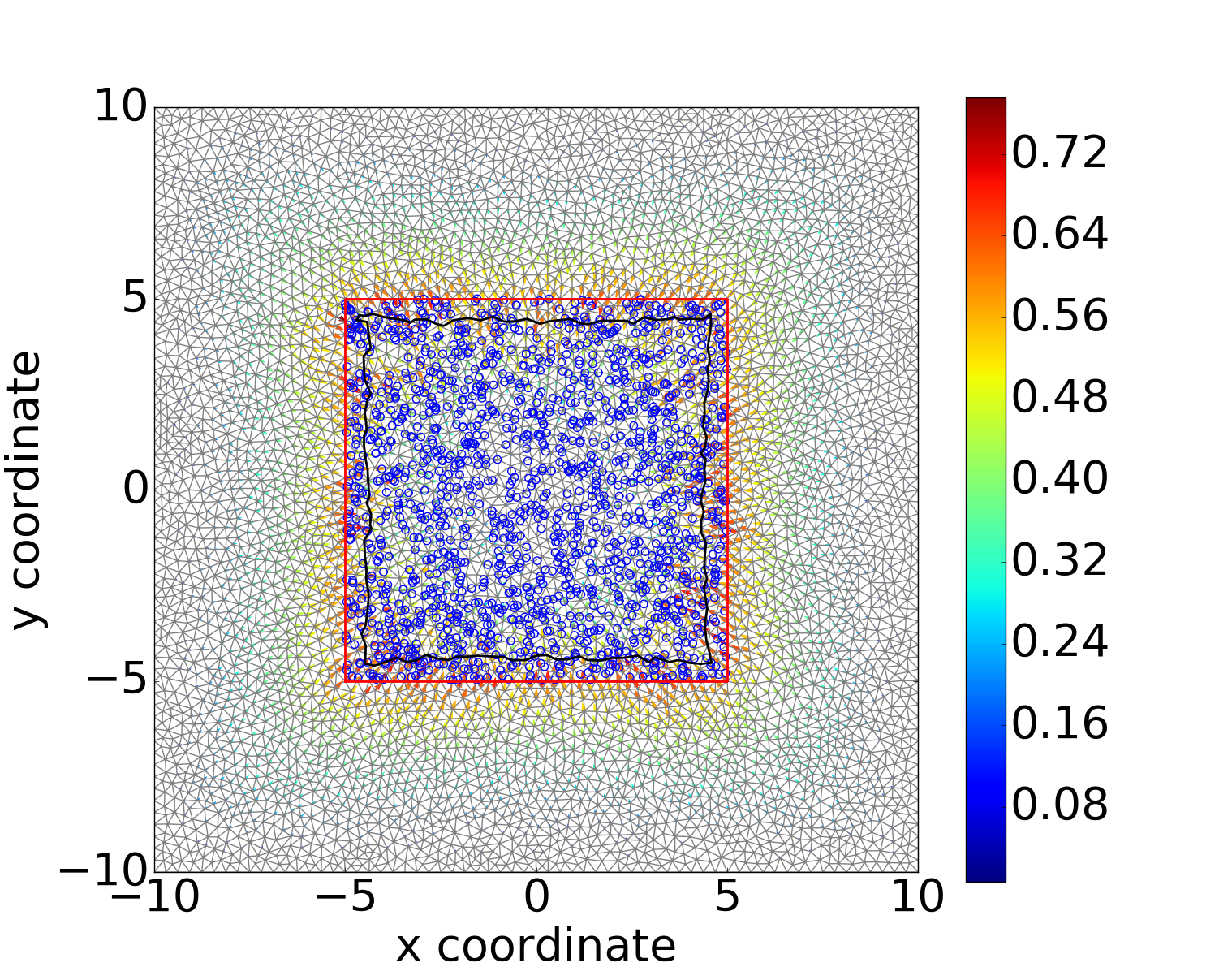}	
\caption{Identical equal-area square is used to approximate all cells. The blue circles are the cell positions, the red line and black curve present the original and deformed boundary of $\Omega_w$, respectively.}
\label{Fig_multi_eqarea_sq}
\end{figure}

\begin{figure}[htpb]
\centering
\includegraphics[width=0.85\textwidth]{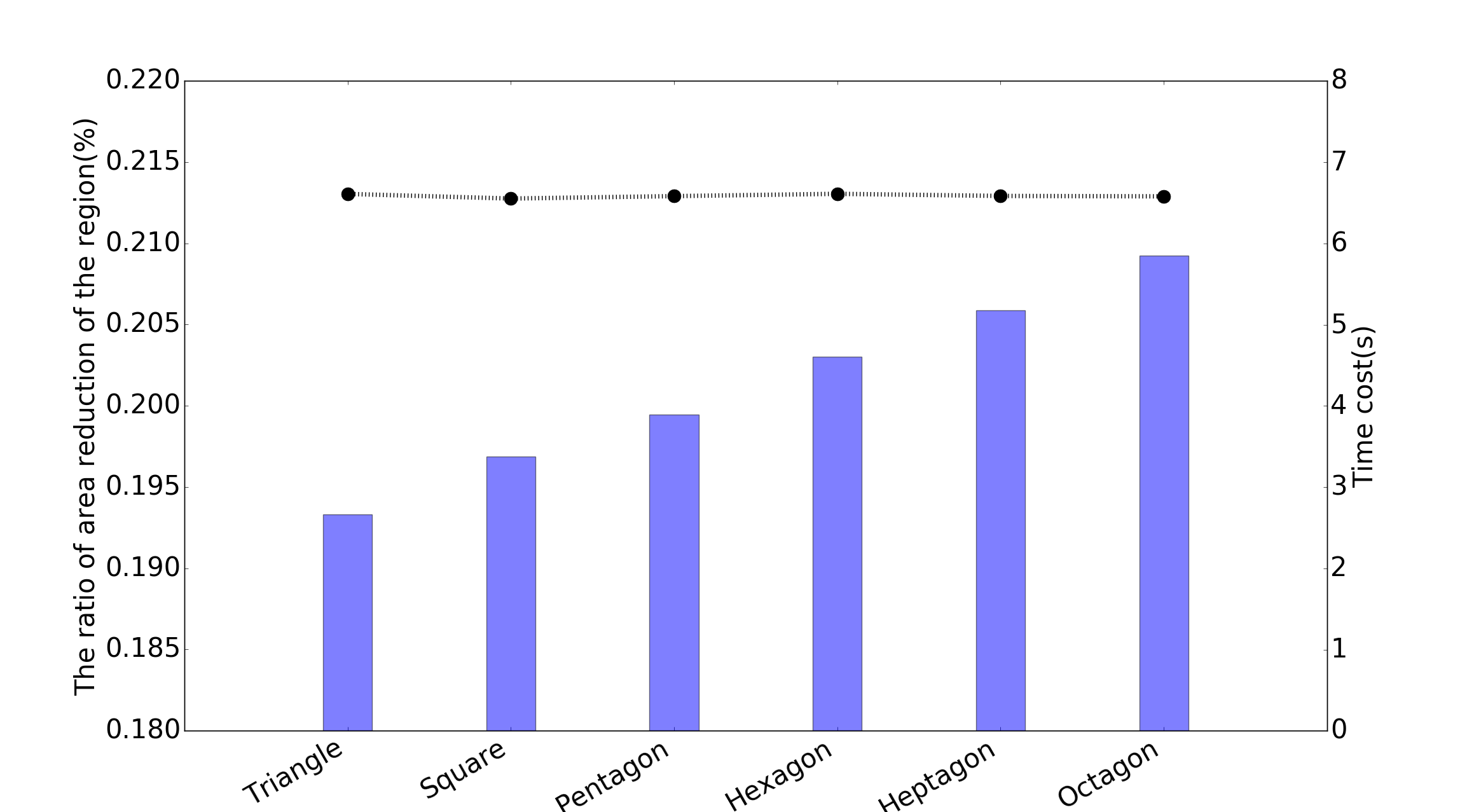}	
\caption{The blue bars indicate the computational cost; the curves display the relative reduction ratio of the subdomain area.}
\label{Fig_multi_eqarea_ratio_time}
\end{figure}

\section{Discussion and Conclusions}\label{Conclusion}
In this paper, we mainly discussed different approaches to solve linear elasticity problems with point sources forces that are exerted on cell boundaries. In order to simulate wound contraction, it is crucially important to solve the equations for balance of momentum. The body forces are determined by (myo)fibroblasts that exert forces on their immediate extracellular environment. Since we model the forces by the use of point forces which makes the solution not be in the $\boldsymbol{H}^1$ Sobolev space for dimensions exceeding one, we analysed the relation between the immersed boundary approach and the 'hole approach' and it has been computationally illustrated that the transition from the immerse boundary to the 'hole approach' has a continuous nature with respect to the elasticity in the cellular region. We proved that the finite-element approximations of the two approaches are the same if the stiffness in the cell is neglected. For large numbers of (migrating) cells, it becomes very beneficial to reduce the polygonal order of the representation of the cell boundary. The results indicate that an approximation of a cell boundary by a triangle or square is already sufficiently accurate, and the triangular representation is the least time-consuming. Furthermore, the computation of the subdomain area by the use of connecting all the boundary vertices to compute a 'polygon' area is the most efficient procedure, combined with applying shoelace method.

\section*{Acknowledgement}
\noindent
Authors acknowledge the Chinese Scholarship Council for financial support to this project.



\section*{References}
\bibliographystyle{elsarticle-num} 
\bibliography{elsarticle-template-num}


%
%
%
\end{document}